\def\triangle{\Delta}
\def\be1{{\begin{equation}}}
\def\ee1{{\end{equation}}}
\def\part{\partial}
\def\ba{\begin{array}}
\def\ea{\end{array}}
\newtheorem{them}{Theorem}[section]
\newtheorem{corollary}{Corollary}[section]
\newtheorem{definition}[them]{Definition}
\numberwithin{equation}{section}
\newtheorem{lemma}{Lemma}[section]
\newtheorem{proposition}[lemma]{Proposition}
\newtheorem{theorem}[lemma]{Theorem}
\newtheorem{remark}[lemma]{Remark}
\title[Lagrangian F-stability of closed Lagrangian self-shrinkers]
{Lagrangian F-stability of closed Lagrangian self-shrinkers}
\author{Jiayu Li and Yongbing Zhang}
\subjclass[2010]{53C42, 53C44}
\keywords{Lagrangian self-shrinker, F-stability}
\address{School of Mathematical Sciences\\
         USTC\\
         Hefei, 230026, Anhui Province, China.}
\email{jiayuli@ustc.edu.cn, ybzhang@amss.ac.cn}
\begin{document}
\maketitle

\begin{abstract}
In this paper, we study the Lagrangian F-stability of closed Lagrangian self-shrinkers immersed in complex Euclidean space.
We show that any closed Lagrangian self-shrinker with first Betti number greater than one is Lagrangian F-unstable.
In particular, any two-dimensional embedded closed Lagrangian self-shrinker is Lagrangian F-unstable.
For a closed Lagrangian self-shrinker with first Betti number equal to one,
we show that Lagrangian F-stability is equivalent to Hamiltonian F-stability.
We also characterize Hamiltonian F-stability of a closed Lagrangian self-shrinker by its spectral property of the twisted Laplacian.
\end{abstract}

\section{Introduction}

By a self-shrinker, we mean an immersed surface in Euclidean space
whose mean curvature vector is related to the normal part of the position vector by
\begin{equation}\label{selfshrinker1}
H=-c(x-x_0)^\perp, \quad c>0.
\end{equation}
A self-shrinker gives rise to a homothetically shrinking solution to the mean curvature flow.
The geometric object becomes important since Huisken's monotonicity formula \cite{Huisken} tells that
any time-slice of a tangent flow (cf. Section \ref{section2}) at a Type-I singularity is a self-shrinker.
The tangent flow at a general singularity is a homothetically shrinking weak solution of the mean curvature flow
(Brakke flow \cite{Brakke}), see \cite{Ilmanen,White}.

Abresch and Langer classified all immersed closed self-shrinkers in the plane.
However even for the case of two dimensional self-shrinkers in $\mathbb{R}^3$,
various examples are expected and a complete classification seems impossible, see for instance \cite{AICh}.
Besides using point-wise conditions, for example mean convexity \cite{Huisken, Huisken2},
recently Colding and Minicozzi \cite{CM} employing a new kind stability to classify self-shrinkers.
That is the entropy-stability for self-shrinkers.

For an $n$-dimensional immersed surface $\Sigma\hookrightarrow \mathbb{R}^N$,
Colding and Minicozzi \cite{CM} introduced the entropy of $\Sigma$ by
\begin{equation}\label{entropy}
\lambda(\Sigma)=\sup_{x_0\in \mathbb{R}^N,t_0>0}\int_\Sigma(4\pi t_0)^{-\frac{n}{2}}e^{-\frac{|x-x_0|^2}{4t_0}}d\mu.
\end{equation}
The entropy $\lambda$ has very nice properties. For example, it is invariant under dilations and rigid motions.

A self-shrinker is called entropy-stable if it is a local minimum of the entropy functional.
The entropy-stability is closely related to the singular behavior of the mean curvature flow, based on the fact that
$\lambda$ is non-increasing along the mean curvature flow in Euclidean space.
In fact entropy-stable self-shrinkers are considered as generic singularities of mean curvature flow in \cite{CM},
i.e. those can not be perturbed away.

In order to classify entropy-stable self-shrinkers, Colding and Minicozzi \cite{CM}
introduced the notion of F-stability for self-shrinkers.
The F-functional with respect to $x_0\in \mathbb{R}^N, t_0>0$ of an immersed surface $\Sigma$ is defined by
\begin{equation}\label{Ffunctional}
F_{x_0,t_0}(\Sigma)=\int_\Sigma(4\pi t_0)^{-\frac{n}{2}}e^{-\frac{|x-x_0|^2}{4t_0}}d\mu.
\end{equation}
Note that the entropy $\lambda(\Sigma)$ is the supremum of the F-functional taken over all $x_0\in \mathbb{R}^N, t_0>0$.
A critical point of $F_{x_0,t_0}$ is self-shrinkers that becomes extinct at $(x_0,t_0)$ (cf. Section 2), and
such a self-shrinker $\Sigma$ is called F-stable if for any $1$-parameter family of deformations $\Sigma_s$ of $\Sigma=\Sigma_0$,
there exist deformations $x_s$ of $x_0$ and $t_s$ of $t_0$ such that
$(F_{x_s,t_s}(\Sigma_s))''\geq 0$ at $s=0$.
Roughly speaking, a self-shrinker is critical point of the entropy functional,
and an F-stable self-shrinker is a self-shrinker at which the second variation of entropy $\lambda$ is non-negative.

The F-stability are closely related to entropy-stability,
and the classification of entropy-stable self-shrinkers relies on the classification of F-stable self-shrinkers.
Colding and Minicozzi \cite{CM} showed that shrinking spheres, cylinders and planes are the only codimension one
F-stable (equivalently, entropy-stable) self-shrinkers.

In this paper we carry over some of Colding and Minicozzi's ideas to the Lagrangian mean curvature flow case.
In particular we shall explore the Lagrangian F-stability of closed Lagrangian self-shrinkers.
We assume our Lagrangian self-shrinkers are closed, orientable and have dimensions greater or equal to two.

An immersed surface $\Sigma^n$ in $\mathbb{C}^n$ is called Lagrangian if the standard K\"ahler form $\overline{\omega}$ of $\mathbb{C}^n$
restricted to $\Sigma$ is zero, or equivalently the standard complex structure of $\mathbb{C}^n$ maps any tangent vector of $\Sigma$ to a normal vector.
When the ambient space is released to K\"ahler-Einstein manifolds, Smoczyk \cite{Smoczyk1}
showed that along the mean curvature flow the Lagrangian condition is preserved, i.e. a Lagrangian mean curvature flow.
The Lagrangian mean curvature flow was devised mainly for searching minimal Lagrangian submanifolds in a Calabi-Yau manifold,
and becomes one of main tools in understanding Strominger-Yau-Zaslow's conjecture in Mirror symmetry \cite{SYZ}
and Thomas-Yau's conjecture \cite{ThomasYau}; See for instance \cite{ChenLi01,ChenLi02,Neves1,Neves2,Smoczyk3,Wang}.
For recent developments of the Lagrangian mean curvature flow in K\"ahler-Einstein manifolds,
see for instance survey papers \cite{Neves,Smoczyk2, WangMT} and the references therein.

A Lagrangian self-shrinker by definition is a self-shrinker satisfying the Lagrangian condition.
Type-I singularities along the Lagrangian mean curvature flow are modeled by self-shrinkers.
The study of Lagrangian self-shrinkers has draw some attentions recently.
In particular, many compact or noncompact examples are found, see for instance
\cite{Anciaux,CastroLerma,CastroLerma2,JoyceLeeTsui,LeeWang,LeeWang2}.

The F-functional and the entropy of a Lagrangian surface $\Sigma$ are given by (\ref{Ffunctional}) and (\ref{entropy}) respectively.
However if we stick to the Lagrangian mean curvature flow, admissible deformations are Lagrangian deformations,
i.e. those preserve the Lagrangian condition.
A normal vector field is called a Lagrangian variation if it is an infinitesimal Lagrangian deformation.
There is a correspondence between Lagrangian variations and closed $1$-forms via $X\leftrightarrow \theta:=-i_X\overline{\omega}$.
If $X$ is a normal vector field such that $-i_X\overline{\omega}$ is exact, we call $X$ a Hamiltonian variation.

When restricted to Lagrangian deformations, a critical point of $F_{x_0,t_0}$ is
a Lagrangian self-shrinker that becomes extinct at $(x_0,t_0)$
and a Lagrangian self-shrinker is a critical point of the entropy functional.
We call a Lagrangian self-shrinker $\Sigma$ Lagrangian entropy-stable
if it is a local minimum of the entropy under Lagrangian deformations,
and a Lagrangian self-shrinker $\Sigma$ is called Lagrangian (resp. Hamiltonian) F-stable
if for any $1$-parameter family of Lagrangian (resp. Hamiltonian) deformations $\Sigma_s$ of $\Sigma=\Sigma_0$,
there exist deformations $x_s$ of $x_0$ and $t_s$ of $t_0$ such that $(F_{x_s,t_s}(\Sigma_s))''\geq 0$ at $s=0$.

Using the correspondence between Lagrangian variations and closed $1$-forms,
we can rewrite the second variation of the F-functional in terms of closed $1$-form.
This correspondence was used by Oh \cite{Oh} in studying the Lagrangian stability of minimal Lagrangian submanifold in K\"ahelr manifold.
It was first observed by Smoczyk that any closed Lagrangian self-shrinker has nontrivial Maslov class, i.e. $[-i_H\overline{\omega}]\neq 0$,
hence its first Betti number $b_1\geq 1$, see \cite{CastroLerma2}.
In fact, the mean curvature form $-i_H\overline{\omega}$ is a twisted harmonic form (cf. Section 3).
We find that the Lagrangian variation associated to a twisted harmonic $1$-form $\theta\notin [-i_H\overline{\omega}]$
decreases the entropy. This allows us to prove the following

\begin{theorem}\label{thm1}
Any closed Lagrangian self-shrinker with $b_1\geq 2$ is Lagrangian $F$-unstable.
\end{theorem}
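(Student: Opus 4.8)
The plan is to produce a single Lagrangian variation along which the second variation of $F_{x_0,t_0}$ is strictly negative for every compensating deformation $(x_s,t_s)$ of the center and scale. First I would invoke the twisted Hodge theory set up in Section 3: the twisted Laplacian on $1$-forms is the weighted (drift) Hodge Laplacian $\Delta_\rho=d\,\delta_\rho+\delta_\rho\,d$ attached to the Gaussian weight $\rho=|x-x_0|^2/4$, and its kernel, the space $\mathcal H$ of twisted harmonic $1$-forms, is isomorphic to $H^1(\Sigma;\mathbb R)$ via $\theta\mapsto[\theta]$; hence $\dim\mathcal H=b_1$. Since $\theta_H:=-i_H\overline{\omega}$ is a nonzero element of $\mathcal H$ (Smoczyk's observation, recalled in Section 3) and $b_1\geq 2$, the orthogonal complement of $\theta_H$ inside $\mathcal H$ for the weighted $L^2$ inner product $\langle\cdot,\cdot\rangle_\rho$ is nonzero, so I may fix a twisted harmonic $\theta\neq 0$ with $\langle\theta,\theta_H\rangle_\rho=0$. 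Such a $\theta$ is linearly independent from $\theta_H$ in $\mathcal H$, so $[\theta]\notin\mathbb R[\theta_H]$.

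Next I would feed the associated Lagrangian variation $X$ determined by $-i_X\overline{\omega}=\theta$ (explicitly $X=J\theta^\sharp$) into the second-variation formula for $F_{x_0,t_0}$ written in terms of the closed $1$-form $\theta$. Because $\theta$ is closed the term in $d\theta$ disappears, and because $\theta$ is coclosed, $\delta_\rho\theta=0$, so the remaining kinetic terms vanish and the reduction $\Delta_\rho\theta=0$ places $\theta$ at the bottom of the spectrum of the twisted Laplacian, strictly below the stability threshold; thus the uncompensated reduced second variation is a strictly negative multiple of $\|\theta\|_\rho^2$. It then remains to show the compensating motions cannot restore nonnegativity, and here I use two structural facts together with the orthogonality above. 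Varying $x_0$ by a constant vector $v$ produces the variation whose $1$-form is $-i_v\overline{\omega}\big|_\Sigma$, which is \emph{exact}, being $d\big(\langle Jv,x\rangle|_\Sigma\big)$ up to sign; since any twisted harmonic form is weighted-$L^2$-orthogonal to exact forms, as $\langle\theta,df\rangle_\rho=\langle\delta_\rho\theta,f\rangle_\rho=0$, the translation corrections are $\rho$-orthogonal to $\theta$. Varying $t_0$ moves $\Sigma$ along its mean curvature vector $H$ and so corresponds to the direction $\theta_H$, to which $\theta$ is $\rho$-orthogonal by construction. Consequently the entire compensation space, spanned by the exact translation forms and $\theta_H$, is $\rho$-orthogonal to $\theta$; the cross terms through which $(x_s,t_s)$ enter the second variation all vanish, and the second variation equals its strictly negative uncompensated value for every choice of $(x_s,t_s)$. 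Hence $\Sigma$ is Lagrangian F-unstable.

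The main obstacle I expect is not the sign of the leading term but verifying the clean splitting that makes the orthogonality argument decisive: I must confirm, from the explicit second-variation formula in the $1$-form picture, that the parameters $(x_s,t_s)$ enter only through inner products $\langle\theta,\cdot\rangle_\rho$ against the translation and dilation forms, so that no hidden coupling to $\theta$ survives once these pairings are killed. Establishing that the only compensatable directions are the exact (translation) forms and the single class $[\theta_H]$ (dilation), and that these are precisely the directions a twisted harmonic form orthogonal to $\theta_H$ avoids, is the crux; with it in hand the instability is forced by the dimension count $b_1\geq 2$.
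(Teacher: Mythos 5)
Your proposal is correct and follows essentially the same route as the paper's proof: rewrite the second variation in terms of the closed $1$-form $\theta=-i_X\overline{\omega}$, use Bueler's twisted Hodge decomposition to choose a twisted harmonic representative (so $d_f^*\theta=0$ kills both the positive term and the translation cross-term, the latter being a weighted pairing with the exact forms $d\langle Jy,x\rangle$), and then exploit $b_1\geq 2$. The only difference is cosmetic and harmless: where the paper completes the square to $-|\theta+hH|^2$ and argues this cannot vanish because $[\theta]$ is not proportional to the Maslov class, you instead choose $\theta$ weighted-$L^2$-orthogonal to the mean curvature form inside the $b_1$-dimensional space of twisted harmonic $1$-forms, which makes the dilation cross-term vanish identically and yields the same strict negativity.
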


In $\mathbb{C}^2$, the embedded closed Lagrangian surface has very strict topological constraint. It has to be torus.
As a corollary, in $\mathbb{C}^2$ there are no embedded closed Lagrangian F-stable Lagrangian self-shrinkers.
In case that the closed Lagrangian self-shrinker has $b_1=1$, we are led to study the Hamiltonian F-stability.

\begin{theorem}\label{thm2}
For a Lagrangian self-shrinker $\Sigma$ with $b_1=1$, the Lagrangian F-stability of $\Sigma$ is equivalent to
the Hamiltonian F-stability of $\Sigma$.
\end{theorem}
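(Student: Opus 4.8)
The plan is to show that, when $b_1=1$, both stability notions are the nonnegativity of one and the same quadratic form on one and the same space, so that they coincide. Since every Hamiltonian variation is a Lagrangian variation, Lagrangian F-stability trivially implies Hamiltonian F-stability, and the whole content lies in the converse.

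First I would fix the two families of compensating directions. A translation of $\Sigma$ by a constant vector $v\in\mathbb{C}^n$ has associated $1$-form $\theta_v=-i_{v^\perp}\overline{\omega}=-i_v\overline{\omega}|_\Sigma$; since $i_v\overline{\omega}$ is a constant-coefficient form on $\mathbb{C}^n$ it is exact, so each $\theta_v$ is exact, i.e. a Hamiltonian variation, and these are the directions absorbed by deforming $x_0$. The infinitesimal dilation is the variation by $(x-x_0)^\perp=-\tfrac1c H$, whose $1$-form is a multiple of the mean curvature form $\theta_H=-i_H\overline{\omega}$, and this is the direction absorbed by deforming $t_0$. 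Because $\theta_H$ is twisted harmonic and represents the nontrivial Maslov class, $[\theta_H]\neq0$ in $H^1(\Sigma;\mathbb{R})$.

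Next I would invoke the second variation formula to phrase F-stability as positivity of the compensated form. Write $\mathcal C$ for the closed $1$-forms, $\mathcal E\subset\mathcal C$ for the exact ones, and $\mathcal T\subset\mathcal E$ for the span of the translation forms $\theta_v$. Building the optimal choice of deformations $x_s$ of $x_0$ and $t_s$ of $t_0$ into the second variation yields a quadratic form $\overline{\mathcal Q}$ whose nonnegativity on $\mathcal C$ is Lagrangian F-stability and whose nonnegativity on $\mathcal E$ is Hamiltonian F-stability; note that $x_s$ and $t_s$ are parameters of $F$ and hence available in \emph{both} definitions. The essential structural point, exactly as in \cite{CM}, is that coupling a deformation by $\theta_v$ with a translation of $x_0$, and a deformation by $\theta_H$ with a rescaling of $t_0$, are genuine invariances of $F$; hence $\mathcal T$ and the line $\mathbb{R}\theta_H$ lie in the radical of $\overline{\mathcal Q}$. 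Consequently $\overline{\mathcal Q}$ descends to $\mathcal C/(\mathbb{R}\theta_H\oplus\mathcal T)$ and to $\mathcal E/\mathcal T$, and the two stabilities become $\overline{\mathcal Q}\geq0$ on these respective quotients.

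Finally I would use $b_1=1$ to identify the quotients. Since $[\theta_H]\neq0$ and $\dim H^1(\Sigma;\mathbb{R})=1$, the class of $\theta_H$ spans the first cohomology, so $\mathcal C=\mathbb{R}\theta_H\oplus\mathcal E$ and every closed $\theta$ is uniquely $\theta=\lambda\theta_H+\eta$ with $\eta$ exact. As $\mathcal T\subset\mathcal E$ while $\theta_H\notin\mathcal E$, the assignment $\theta\mapsto\eta$ induces a linear isomorphism $\mathcal C/(\mathbb{R}\theta_H\oplus\mathcal T)\cong\mathcal E/\mathcal T$; and since $\theta_H$ lies in the radical of $\overline{\mathcal Q}$ we have $\overline{\mathcal Q}(\lambda\theta_H+\eta)=\overline{\mathcal Q}(\eta)$, so this isomorphism carries one form to the other. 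Therefore $\overline{\mathcal Q}\geq0$ on $\mathcal C/(\mathbb{R}\theta_H\oplus\mathcal T)$ if and only if $\overline{\mathcal Q}\geq0$ on $\mathcal E/\mathcal T$, which is precisely the asserted equivalence. The step I expect to be the main obstacle is establishing that $\theta_H$ and $\mathcal T$ lie in the radical of $\overline{\mathcal Q}$ — that the $x_0$- and $t_0$-compensations annihilate not merely the diagonal values but all cross terms $\overline B(\theta_H,\cdot)$ and $\overline B(\theta_v,\cdot)$ — which I would derive by realizing the coupled motions as exact symmetries of the $F$-functional.
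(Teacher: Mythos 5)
Your proposal is correct, and although it rests on the same two pillars as the paper's own proof --- the decomposition $\theta=\lambda\theta_H+\eta$ with $\eta$ exact, available because $[\theta_H]\neq 0$ spans $H^1(\Sigma;\mathbb{R})$ when $b_1=1$, together with the fact that the $\theta_H$-component can be absorbed into the deformation of $t_0$ --- the way you establish the absorption is genuinely different. The paper proceeds by direct computation: it writes $\theta=-2aH+du$, substitutes into (\ref{secondvariationformulaLag2}), uses $d_f^*H=0$, Lemma \ref{Lemma} and $\int_\Sigma du(Jx^\perp)e^{-\frac{|x|^2}{4}}d\mu=0$ to remove the cross terms, and completes the square so that the entire dependence on $a$ and $h$ collapses into $-(a-\frac{1}{2}h)^2\int_\Sigma|x^\perp|^2e^{-\frac{|x|^2}{4}}d\mu$; comparing with (\ref{secondvariationformulaHam}) (take $h=2a$ on the Lagrangian side, $h=0$ on the Hamiltonian side) gives the equivalence. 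You instead obtain the same cancellation abstractly: translating $(\Sigma,x_0)$ together and dilating $(\Sigma,t_0)$ together are exact symmetries of the F-functional, the triple $(\Sigma,0,1)$ is a critical point, so the directions $(v^\perp,v,0)$ and $(H,0,-1)$ are null for the full Hessian, and a reparametrization of $(y,h)$ then shows the compensated form $\overline{\mathcal{Q}}(\theta)=\sup_{y,h}F''$ is unchanged by adding $c\,\theta_H$ or a translation form. This is sound: for closed $\Sigma$ the form $F''$ is strictly concave in $(y,h)$ (no nonzero constant vector is everywhere tangent to a closed surface, and $H\not\equiv 0$), so the supremum is attained and $\overline{\mathcal{Q}}$ is a genuine quadratic form, and your symmetry argument is in effect an invariant reformulation of the mechanism in the proof of Theorem \ref{characterization}, where the choices $h=-a_0$, $y=w$ delete the $H$- and $w^\perp$-components. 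What the paper's computation buys is the explicit expression of $F''$ in terms of $d_f^*du$, which is reused in the spectral characterizations of Section 4 (Theorem \ref{HamiltonianFStable} and the theorem following it); what your route buys is transparency and economy: it makes clear that the only possible gap between Hamiltonian and Lagrangian F-stability consists of the harmonic classes other than $[\theta_H]$, exactly the classes exploited in Theorem \ref{Main2} when $b_1\geq 2$. One minor simplification: quotienting by $\mathcal{T}$ is superfluous, since $\mathcal{T}\subset\mathcal{E}$; for the equivalence only $\theta_H$ needs to lie in the radical.
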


However for $b_1\geq 2$, there is indeed a difference between Lagrangian F-stability and Hamiltonian F-stability.
For example the Clifford torus
$$T^n=\{(z^1,\cdots,z^n): \quad |z^1|^2=\cdots=|z^n|^2=2\}$$
is Hamiltonian F-stable but not Lagrangian F-stable.
It is an interesting question whether there exists a closed Lagrangian F-stable self-shrinker of dimension greater than one,
i.e. a closed Hamiltonian F-stable Lagrangian self-shrinker with $b_1=1$.
We also ask if there exists a complete noncompact Lagrangian F-stable Lagrangian self-shrinker which has polynomial volume growth,
besides Lagrangian planes and Lagrangian $S^1\times \mathbb{R}^{n-1}$?
It's also interesting to have more examples of Hamiltonian F-stable Lagrangian self-shrinkers.

We can give two characterizations of Hamiltonian F-stability for a closed Lagrangian self-shrinker with arbitrary $b_1(\geq 1)$
by its spectral property of the twisted Laplacian.
Without loss of generality, we assume that the closed Lagrangian self-shrinker becomes extinct at $(0,1)$, i.e. $H=-\frac{1}{2}x^\perp$.
The twisted Laplacian is then
$\triangle_f=\triangle-\frac{1}{2}<x^\top,\nabla \cdot>.$

\begin{theorem}\label{thm3}
A closed Lagrangian self-shrinker that becomes extinct at $(0,1)$ is Hamiltonian F-stable if and only if the twisted Laplacian $\triangle_f$ has
$$\lambda_1=\frac{1}{2}, \quad \Lambda_{\frac{1}{2}}=\{<x,w>, w\in \mathbb{R}^{2n}\}; \quad \lambda_2\geq 1.$$
\end{theorem}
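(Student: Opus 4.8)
The plan is to reduce the Hamiltonian second variation of $F_{x_0,t_0}$ to a spectral inequality for $\Delta_f$ and then read off the three conditions eigenfunction by eigenfunction. First I would start from the second variation formula for $F_{x_0,t_0}$ restricted to Hamiltonian deformations derived in the previous section. Since a Hamiltonian variation is $X=J\nabla u$ for a function $u$ on $\Sigma$ (determined up to a constant by $-i_X\overline\omega=du$), the second variation becomes a quadratic form $Q(u)$ on $C^\infty(\Sigma)/\mathbb{R}$. I would then pin down which deformations the free parameters $x_s,t_s$ compensate. A translation by $w\in\mathbb{R}^{2n}$ is the Lagrangian variation $w^\perp$, and since $Jw^\top$ is normal one computes $-i_{w^\perp}\overline\omega=d\langle x,-Jw\rangle$ on $\Sigma$, so translations are exactly the Hamiltonian variations with linear potentials $\langle x,w'\rangle$, $w'\in\mathbb{R}^{2n}$. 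A direct computation using $\Delta^\Sigma x=\vec H$ and $\vec H=-\tfrac12 x^\perp$ gives $\Delta_f\langle x,w'\rangle=-\tfrac12\langle x,w'\rangle$, so these are precisely the $\tfrac12$-eigenfunctions produced by translations. By contrast the scaling direction $x^\perp=-2\vec H$ corresponds to the closed but, by nontriviality of the Maslov class, non-exact form $-2\theta_H$; it is Lagrangian but not Hamiltonian, so it is the parameter $t_s$, not a Hamiltonian mode, that accounts for dilation.

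Next I would carry out the optimization over $x_s$ and $t_s$ and rewrite $Q$ through $\Delta_f$. The expected outcome, using that the Lagrangian second fundamental form is totally symmetric and that the variation of $-i_H\overline\omega$ under $X=J\nabla u$ is governed by $\Delta u$, is that after subtracting the translation and dilation contributions the reduced form is, up to a positive constant,
$$Q(u)=\int_\Sigma\big((\Delta_f u)^2-|\nabla u|^2\big)\,e^{-f}\,d\mu,$$
with the linear-potential directions already projected out by the $x_s$-compensation. Expanding $u=\sum_k c_k\phi_k$ in an $L^2(e^{-f}d\mu)$-orthonormal basis of $\Delta_f$-eigenfunctions, $\Delta_f\phi_k=-\lambda_k\phi_k$, one has $\int(\Delta_f\phi_k)^2 e^{-f}=\lambda_k^2\|\phi_k\|^2$ and $\int|\nabla\phi_k|^2 e^{-f}=\lambda_k\|\phi_k\|^2$, hence $Q(\phi_k)=\lambda_k(\lambda_k-1)\|\phi_k\|^2$.

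Finally I would read off Hamiltonian F-stability as the non-negativity of $Q$ on the complement of the linear potentials. Since $\lambda(\lambda-1)\ge0$ exactly when $\lambda\ge1$ (as $\lambda>0$ off the constants), any eigenvalue in $(0,1)$ other than the translation value $\tfrac12$ yields an uncompensated negative direction, and any $\tfrac12$-eigenfunction that is not a linear potential is likewise negative and not removed by translations. Thus $Q\ge0$ on Hamiltonian variations if and only if $\Delta_f$ has no eigenvalue in $(0,\tfrac12)\cup(\tfrac12,1)$ and its $\tfrac12$-eigenspace is exactly $\{\langle x,w\rangle:w\in\mathbb{R}^{2n}\}$, i.e. $\lambda_1=\tfrac12$, $\Lambda_{1/2}=\{\langle x,w\rangle\}$, and $\lambda_2\ge1$, which is the assertion. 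The main obstacle is the second step: obtaining the clean form $Q$ requires converting the general higher-codimension second variation of $F$ into the potential $u$ and showing that all second-fundamental-form and curvature terms, together with the $x_s,t_s$ optimization, collapse via the Lagrangian symmetries and the self-shrinker identity $\vec H=-\tfrac12 x^\perp$ into precisely $(\Delta_f u)^2-|\nabla u|^2$; checking that the non-Hamiltonian scaling direction is correctly absorbed by $t_s$ without spuriously stabilizing bad modes will also need care.
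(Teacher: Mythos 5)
Your proposal is correct and follows essentially the same route as the paper: starting from the Hamiltonian second variation formula, identifying translations with the linear potentials $\langle x,w\rangle$ (the $\tfrac12$-eigenfunctions of $\triangle_f$), choosing $y$ to cancel the linear part of $u$ together with $h=0$, and reducing to the quadratic form $\int_\Sigma\bigl[(\triangle_f u)^2-|\nabla u|^2\bigr]e^{-\frac{|x|^2}{4}}d\mu$ on the complement of the linear potentials, whose sign is read off eigenvalue by eigenvalue via $\lambda(\lambda-1)$. The ``main obstacle'' you flag is already settled by the paper's Hamiltonian second variation formula (Proposition 4.1, where $|d_f^*du|^2=(\triangle_f u)^2$), so nothing essential is missing from your plan.
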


\begin{theorem}\label{thm4}
A closed Lagrangian self-shrinker $\Sigma$ that becomes extinct at $(0,1)$ is Hamiltonian F-stable if and only if
\begin{equation*}\label{HamiltonianIneqn}
\int_\Sigma|du|^2e^{-\frac{|x|^2}{4}}d\mu\geq \int_\Sigma u^2e^{-\frac{|x|^2}{4}}d\mu, \quad \text{for all u s.t. }\quad
\int_\Sigma ue^{-\frac{|x|^2}{4}}d\mu=\int_\Sigma uxe^{-\frac{|x|^2}{4}}d\mu=0.
\end{equation*}
\end{theorem}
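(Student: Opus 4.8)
The plan is to deduce this from Theorem \ref{thm3}, whose spectral conditions I treat as the working characterization of Hamiltonian F-stability, by rephrasing the asserted inequality as a lower bound on a restricted part of the spectrum of $\triangle_f$. Throughout write $d\nu=e^{-|x|^2/4}d\mu$ for the Gaussian-weighted measure. The operator $-\triangle_f$ is self-adjoint and nonnegative on $L^2(d\nu)$, and since $\Sigma$ is closed its spectrum $0=\lambda_0<\lambda_1\le\lambda_2\le\cdots$ is discrete with a complete orthonormal system of eigenfunctions. Weighted integration by parts gives, for every $u$,
\[
\int_\Sigma |du|^2\, d\nu = \int_\Sigma (-\triangle_f u)\,u\, d\nu,
\]
so that the Rayleigh quotient $\int_\Sigma|du|^2 d\nu\big/\int_\Sigma u^2 d\nu$ is precisely the quantity minimized by the eigenfunctions of $-\triangle_f$.

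First I would record the two obvious members of the spectrum. The constants span the kernel $\lambda_0=0$. Using $\triangle x=H=-\frac{1}{2}x^\perp$ together with $\nabla f=\frac{1}{2}x^\top$, one computes $\triangle_f x=\triangle x-\frac{1}{2}\langle x^\top,\nabla x\rangle=-\frac{1}{2}x^\perp-\frac{1}{2}x^\top=-\frac{1}{2}x$, so every coordinate function $\langle x,w\rangle$, $w\in\mathbb{R}^{2n}$, is an eigenfunction of $-\triangle_f$ with eigenvalue $\frac{1}{2}$. Consequently $W:=\mathrm{span}\{1,x_1,\dots,x_{2n}\}$ is $\triangle_f$-invariant, and the two constraints in the statement, $\int_\Sigma u\,d\nu=0$ and $\int_\Sigma u x\,d\nu=0$, say exactly that $u$ lies in the $L^2(d\nu)$-orthogonal complement $V:=W^\perp$. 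Since $\triangle_f$ is self-adjoint, $V$ is $\triangle_f$-invariant and decomposes into eigenspaces, so the inequality of Theorem \ref{thm4} is equivalent to the single spectral statement
\[
\inf_{0\neq u\in V}\frac{\int_\Sigma|du|^2 d\nu}{\int_\Sigma u^2 d\nu}\ \ge\ 1,
\]
i.e. every eigenvalue of $-\triangle_f$ whose eigenfunction is orthogonal to $W$ is at least $1$.

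It then remains to match this with the conditions of Theorem \ref{thm3}. For the forward implication, if $\lambda_1=\frac{1}{2}$ with $\Lambda_{1/2}=\{\langle x,w\rangle\}$ and $\lambda_2\ge1$, then $W=\Lambda_0\oplus\Lambda_{1/2}$ and the spectrum on $V=W^\perp$ begins at $\lambda_2\ge1$, giving the displayed inequality. For the converse, assume the inequality. Any eigenfunction with eigenvalue $\mu\notin\{0,\frac{1}{2}\}$ is orthogonal to the eigenspaces $\Lambda_0$ and $\Lambda_{1/2}$, hence to $W\subseteq\Lambda_0\oplus\Lambda_{1/2}$, so it lies in $V$ and $\mu\ge1$; this excludes $(0,\frac{1}{2})\cup(\frac{1}{2},1)$ from the spectrum, giving $\lambda_1=\frac{1}{2}$. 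Similarly, a $\frac{1}{2}$-eigenfunction orthogonal to every $\langle x,w\rangle$ would lie in $V$ yet have Rayleigh quotient $\frac{1}{2}<1$, a contradiction, so $\Lambda_{1/2}=\{\langle x,w\rangle:w\in\mathbb{R}^{2n}\}$; finally the eigenfunctions realizing $\lambda_2$ lie in $V$, whence $\lambda_2\ge1$. These are exactly the hypotheses of Theorem \ref{thm3}, so the inequality is equivalent to Hamiltonian F-stability.

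The one place demanding care is the converse bookkeeping: I must be sure that $W$ is genuinely $\triangle_f$-invariant (immediate from the eigenfunction computation) so that the spectral decomposition of $V$ is legitimate, and that the $\frac{1}{2}$-eigenspace is \emph{exactly} $\mathrm{span}\{x_i\}$ rather than merely containing it — the step where the Rayleigh-quotient value $\frac{1}{2}$ collides with the hypothesized bound $1$. Beyond this, the argument is a routine translation between a Poincar\'e-type inequality on $W^\perp$ and the location of the bottom of the restricted spectrum.
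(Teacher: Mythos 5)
Your proof is correct and takes essentially the same route as the paper: the paper obtains Theorem \ref{thm4} as a direct corollary of (the proof of) Theorem \ref{thm3}, and your argument — identifying $W=\mathrm{span}\{1,\langle x,w\rangle\}$ as a $\triangle_f$-invariant space spanned by eigenfunctions, noting the constraints say exactly $u\in W^{\perp}$, and translating the constrained Poincar\'e inequality into the statement that the spectrum of $-\triangle_f$ on $W^{\perp}$ lies in $[1,\infty)$ — is precisely the spectral bookkeeping the paper leaves implicit. The only points used tacitly (by you and by the paper alike) are that $\Sigma$ is connected, so the kernel of $\triangle_f$ is the constants, and that some $\langle x,w\rangle\not\equiv 0$, so $\frac{1}{2}$ genuinely occurs as an eigenvalue; both are immediate.
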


The classification problem of self-shrinkers with higher codimensions is much more complicated due to the complexity of the normal bundle,
see for instance \cite{AndrewsLiWei,Smoczyk5}.
Very recently, F-stability of self-shrinkers with higher codimensions was considered in \cite{AndrewsLiWei, ArezzoSun, LeeLue}.
The Lagrangian F-stability has also been considered by Lee and Lue \cite{LeeLue}.
In particular, they proved some of closed Lagrangian self-shrinkers in \cite{Anciaux} are Lagrangian F-unstable.

Colding and Minicozzi's idea of classifying self-similar solutions by employing entropy-stability and F-stability
also applies to other geometric flows,
for the harmonic map heat flow case see \cite{Zhang} and for the Yang-Mills flow case see \cite{ChenWangZhang}.
For Ricci shrinkers and Ricci-flat manifolds, an analogous stability to the F-stability is the linear stability.
A Ricci shrinker (resp. a Ricci-flat manifold) is called linearly stable if the second variation of Perelman's $\nu$-entropy (resp. $\lambda$-entropy) \cite{Perelman} is non-positive at the Ricci shrinker (resp. the Ricci-flat manifold), see \cite{CaoHamiltonIlmanen,CaoZhu}.
It was proved in \cite{DaiWangWei} that any compact Ricci-flat manifold admitting nontrivial parallel spinors is linearly stable.
For the special class of K\"ahler-Ricci solitons with Hodge number $h^{1,1}\geq 2$,
Hall and Murphy \cite{HallMurphy} proved that they are linearly unstable (allowing non-K\"ahlerian deformations),
which extended the result of Cao-Hamilton-Ilmanen \cite{CaoHamiltonIlmanen} in the K\"ahler-Einstein case.
In the contrast, if one considers deformations of K\"ahler metrics in the fixed class $c_1(M)$,
Tian and Zhu \cite{TianZhu} proved that the $\nu$-energy is maximized at a K\"ahler-Ricci soliton.

The paper is organized as follows: in the next section, we compute the first and second variation formula of the F-functional.
The variation formulas of the F-functional will be applied to
Lagrangian self-shrinkers and Lagrangian variations in Section 3,
where we study Lagrangian F-stability of closed Lagrangian self-shrinkers and prove Theorem \ref{thm1}.
In the last section we consider the Hamiltonian F-stability of closed Lagrangian self-shrinkers
and prove Theorem \ref{thm2}, \ref{thm3} and \ref{thm4}.

\section{F-functional, entropy, and second variational formula}\label{section2}

In this section we first recall Huisken's monotonicity formula,
which plays a crucial role in the formation of singularities along the mean curvature flow.
At a given singularity $(x,T)$, one can extract a tangent flow at $(x,T)$ from a sequence of rescaled flows.
Each time-slice of the tangent flow is a self-shrinker.
We also recall the F-functional and Colding-Minicozzi's entropy for immersed surfaces in Euclidean space.
The first and second variation formula of the F-functional will be calculated,
which is a slight modification of the hypersurface case \cite{CM}.
The calculations were also carried out in \cite{AndrewsLiWei,ArezzoSun,LeeLue}.
A critical point of the F-functional $F_{x_0,t_0}$ is a self-shrinkers that becomes extinct at $(x_0,t_0)$,
and the second variation formula gives rise to F-stability of self-shrinkers.
In the end of this section we give a characterization of the F-stability, see also \cite{AndrewsLiWei,ArezzoSun,LeeLue}.
The variation formulas of the F-functional will be applied to Lagrangian self-shrinkers in the next section.

Let $\Sigma$ be an $n$-dimensional complete immersed surface in $\mathbb{R}^N$, $g$ the induced metric on $\Sigma$,
and $\{e_i\}_{i=1}^n$ a local orthonormal frame of $T\Sigma$.
The second fundamental form and the mean curvature of $\Sigma$ are respectively given by
$$h_{ij}=(e_ie_j)^\perp, \quad H=(e_ie_i)^\perp.$$
here the superscript $\perp$ denotes the normal projection.
The projection to the tangential space will be denoted by the superscript $\top$.
If there exist positive constants $C_1, C_2$ and $d$ such that $Vol(B_r(0)\cap \Sigma)\leq C_1r^d+C_2$,
here $r$ is the Euclidean distance, we say that $\Sigma$ has polynomial volume growth.

Let $\Sigma_t$ be a family of immersed surfaces in $\mathbb{R}^N$, evolved by the mean curvature flow
\begin{equation}\label{meancurvatureflow}\nonumber
(\frac{\partial x}{\partial t})^\perp=H.
\end{equation}
Assume $T$ is the first singular time of the mean curvature flow.
For any $x_0\in \mathbb{R}^N, t_0>0$ and $t<\min\{t_0,T\}$, let
$$\rho_{x_0,t_0}(x,t)=[4\pi(t_0-t)]^{-\frac{n}{2}}e^{-\frac{|x-x_0|^2}{4(t_0-t)}}$$
and
$$\Phi_{x_0,t_0}(t)=\int_{\Sigma_t}\rho_{x_0,t_0}(x,t)d\mu_t.$$
Then Huisken's monotonicity formula \cite{Huisken} reads
$$\frac{d}{dt}\int_{\Sigma_t}\rho_{x_0,t_0}(x,t)d\mu_t
=-\int_{\Sigma_t}|H+\frac{(x-x_0)^\perp}{2(t_0-t)}|^2\rho_{x_0,t_0}(x,t)d\mu_t.$$

The monotonicity formula is crucial in understanding the formation of singularities along the mean curvature flow.
At a given singularity $(x,T)$ and for a given $\lambda_j>0$, one can consider the following rescaled flow
\begin{equation}\label{blowup}
\widetilde{\Sigma}_s^{\lambda_j}:=\lambda_j(\Sigma_{T+\lambda_j^{-2}s}-x), \quad s<0.
\end{equation}
Huisken \cite{Huisken} showed that if the mean curvature flow develops a Type-I singularity at time $T$, i.e.
$(T-t)\sup_{\Sigma_t}|h_{ij}|^2$ is uniformly bounded in $t$,
there exists a sequence $\lambda_j\rightarrow \infty$ such that the sequence $\widetilde{\Sigma}_s^{\lambda_j}$
converges smoothly to a limiting flow $\widetilde{\Sigma}_s$.
In general using the monotonicity formula and Brakke's compactness theorem \cite{Brakke}, Ilmanen \cite{Ilmanen} and White \cite{White}
proved that at any given singularity $(x,T)$,
there exists a sequence $\lambda_j\rightarrow \infty$ such that the sequence $\widetilde{\Sigma}_s^{\lambda_j}$
converges weakly to a limiting flow $\widetilde{\Sigma}_s$.
$\widetilde{\Sigma}_s$ is called a tangent flow at $(x,T)$.
Moreover for each $s<0$, $\widetilde{\Sigma}_s$ is a self-shrinker.

If the mean curvature flow is initiating from a compact immersed surface,
Colding and Minicozzi \cite{CM} showed that each time-slice of $\widetilde{\Sigma}_s$ has polynomial volume growth.
From now on we restrict ourselves to self-shrinkers which are smooth and have polynomial volume growth.
More specifical than (\ref{selfshrinker1}), we call an immersed surface in $\mathbb{R}^N$ a self-shrinker that becomes extinct at $(x_0,t_0)$
if it satisfies
\begin{equation}\label{selfshrinker3}
H+\frac{(x-x_0)^\perp}{2t_0}=0.
\end{equation}
For a self-shrinker that becomes extinct at $(x_0,t_0)$, $\sqrt{t_0-t}(\Sigma-x_0)$ defines a homothetically shrinking mean curvature flow.
Note that a self-shrinker that becomes extinct at $(x_0, t_0)$ is also a steady point of Huisken's monotonicity quantity $\Phi_{x_0,t_0}$.

Let $\Sigma$ be a complete self-shrinker which becomes extinct at $(x_0,t_0)$ and has polynomial volume growth.
Colding and Minicozzi \cite{CM} introduced an operator acting on functions on $\Sigma$ by
\begin{equation}\label{twistedLaplacian}
\triangle u-\frac{1}{2t_0}<(x-x_0)^\top,\nabla u>=e^{\frac{|x-x_0|^2}{4t_0}}\text{div}(e^{-\frac{|x-x_0|^2}{4t_0}}\nabla u).
\end{equation}
Analogous operator also appear in other backgrounds, see for instance \cite{Futaki,Lott}.
Given a function $f$, the so-called twisted Laplacian is defined by
$$\triangle_fu=\triangle u-g(\nabla f,\nabla u).$$
The twisted Laplacian on functions is actually $-d_f^*d$,
here $d_f^*$ is the adjoint operator of $d$ with respect to the measure $e^{-f}d\mu$, see for instance \cite{Futaki}.
On the self-shrinker $\Sigma$ which becomes extinct at $(x_0,t_0)$, we choose  $f=\frac{|x-x_0|^2}{4t_0}$. Then
\begin{equation}\label{twistedLaplacian2}
\triangle_f=\triangle -\frac{1}{2t_0}<(x-x_0)^\top,\nabla \cdot>.
\end{equation}
The following "weighted $W^{2,2}$ space" $W_w^{2,2}$  was also introduced in \cite{CM}
$$W_w^{2,2}=\{u| \int_\Sigma(|u|^2+|\nabla u|^2+|\triangle_fu|^2)e^{-\frac{|x-x_0|^2}{4t_0}}d\mu<\infty \}.$$
For any $u, v\in W_w^{2,2}$, it holds that
\begin{equation}\label{integrationbyparts}
\int_\Sigma u\triangle_fve^{-\frac{|x-x_0|^2}{4t_0}}d\mu=-\int_\Sigma g(\nabla u,\nabla v)e^{-\frac{|x-x_0|^2}{4t_0}}d\mu=\int_\Sigma v\triangle_fue^{-\frac{|x-x_0|^2}{4t_0}}d\mu.
\end{equation}

\begin{lemma}\label{Lemma}
Let $\Sigma$ be an $n$-dimensional complete self-shrinker which becomes extinct at $(x_0,t_0)$ and has polynomial volume growth,
$w$ a vector in $\mathbb{R}^{N}$.
Then
\begin{itemize}
\item[(1)] $\int_\Sigma<x-x_0,w>e^{-\frac{|x-x_0|^2}{4t_0}}d\mu=0$;
\item[(2)] $\int_\Sigma(|x-x_0|^2-2nt_0)e^{-\frac{|x-x_0|^2}{4t_0}}d\mu=0$;
\item[(3)] $\int_\Sigma[|x-x_0|^4-4n(n+2)t_0^2+16t_0^3|H|^2]e^{-\frac{|x-x_0|^2}{4t_0}}d\mu=0$;
\item[(4)] $\int_\Sigma[<x-x_0,w>^2-2t_0|w^\top|^2]e^{-\frac{|x-x_0|^2}{4t_0}}d\mu=0$;
\item[(5)] $\int_\Sigma |x-x_0|^2<x-x_0,w>e^{-\frac{|x-x_0|^2}{4t_0}}d\mu=0$;
\item[(6)] $\int_\Sigma <(x-x_0)^\top,w>e^{-\frac{|x-x_0|^2}{4t_0}}d\mu=0$.
\end{itemize}
\end{lemma}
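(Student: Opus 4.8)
The plan is to compute the twisted Laplacian $\triangle_f$ applied to the linear functions $\langle x-x_0,w\rangle$ and to $|x-x_0|^2$, and then feed these into the self-adjointness/integration-by-parts identity (\ref{integrationbyparts}). After translating we may assume $x_0=0$, so that the shrinker equation (\ref{selfshrinker3}) reads $H=-\frac{x^\perp}{2t_0}$ and $f=\frac{|x|^2}{4t_0}$. First I would record the two elementary facts $\triangle x=H$ and $\nabla\langle x,w\rangle=w^\top$ for a constant vector $w\in\mathbb{R}^N$; combined with the shrinker equation these give the clean eigenrelation $\triangle_f\langle x,w\rangle=\langle H,w\rangle-\frac{1}{2t_0}\langle x^\top,w\rangle=-\frac{1}{2t_0}\langle x,w\rangle$, so the linear functions are eigenfunctions of $\triangle_f$ with eigenvalue $-\frac{1}{2t_0}$. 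Similarly, using $\nabla|x|^2=2x^\top$ and $\triangle|x|^2=2n+2\langle x,H\rangle$, the shrinker equation yields $\triangle_f|x|^2=2n-\frac{|x|^2}{t_0}$.

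With these two identities in hand, most of the statements follow by testing against the constant function $u=1$ in (\ref{integrationbyparts}), since $\int_\Sigma\triangle_f v\,e^{-f}d\mu=0$ for any admissible $v$. Applying this to $v=\langle x,w\rangle$ gives (1), and to $v=|x|^2$ gives (2). For (4) I would use the product rule $\triangle_f\langle x,w\rangle^2=2\langle x,w\rangle\triangle_f\langle x,w\rangle+2|\nabla\langle x,w\rangle|^2=-\frac{1}{t_0}\langle x,w\rangle^2+2|w^\top|^2$ and integrate as before. Statement (5) I would obtain from the \emph{self-adjointness} of $\triangle_f$: taking $u=|x|^2$ and $v=\langle x,w\rangle$ in (\ref{integrationbyparts}) and substituting the two eigenrelations reduces, after inserting (1), to $-\frac{1}{2t_0}\int_\Sigma|x|^2\langle x,w\rangle e^{-f}=-\frac{1}{t_0}\int_\Sigma|x|^2\langle x,w\rangle e^{-f}$, forcing this integral to vanish. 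Then (6) is immediate: taking $u=\frac{1}{2}|x|^2$ and $v=\langle x,w\rangle$ in the middle equality of (\ref{integrationbyparts}) gives $\int_\Sigma\langle x^\top,w\rangle e^{-f}=\frac{1}{4t_0}\int_\Sigma|x|^2\langle x,w\rangle e^{-f}=0$ by (5).

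The one genuinely quadratic identity is (3), for which I would take $u=v=|x|^2$ in (\ref{integrationbyparts}). The left-hand side $\int_\Sigma|x|^2\triangle_f|x|^2 e^{-f}$ equals $\int_\Sigma|x|^2(2n-\frac{|x|^2}{t_0})e^{-f}$, while the middle term equals $-\int_\Sigma|\nabla|x|^2|^2e^{-f}=-4\int_\Sigma|x^\top|^2e^{-f}$. Writing $|x^\top|^2=|x|^2-|x^\perp|^2$, eliminating $\int_\Sigma|x|^2e^{-f}$ via (2), and using $16t_0^3|H|^2=4t_0|x^\perp|^2$ (which is just the shrinker equation squared) rearranges the resulting identity into exactly (3).

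The main obstacle is not the algebra, which is routine, but the analytic justification of (\ref{integrationbyparts}): I must check that the polynomials $\langle x,w\rangle$, $\langle x,w\rangle^2$, $|x|^2$, $|x|^4$, together with their first derivatives and twisted Laplacians, actually lie in $W^{2,2}_w$, and that the cut-off contributions at infinity vanish. This is precisely where the polynomial volume growth hypothesis enters: the Gaussian weight $e^{-|x|^2/4t_0}$ decays faster than any polynomial grows, so on annular regions $\{R\le|x|\le 2R\}$ the weighted integrals of these polynomials and the associated boundary terms tend to zero as $R\to\infty$, legitimizing each integration by parts. Once this is established the six identities follow in the order (1), (2), (4), (5), (6), (3).
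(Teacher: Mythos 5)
Your proof is correct and takes essentially the same route as the paper: the same test functions $\langle x-x_0,w\rangle$ and $|x-x_0|^2$, the same eigenrelations for $\triangle_f$ obtained from the shrinker equation, and the same applications of (\ref{integrationbyparts}), with only cosmetic differences (the normalization $x_0=0$, the product rule in item (4), and the ordering of the items). The analytic justification you flag via polynomial volume growth is exactly what the paper folds into the definition of $W^{2,2}_w$ and the statement of (\ref{integrationbyparts}).
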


\begin{proof}
The proof is similar to the hypersurface case in \cite{CM}.

(1) Let
$$u=<x-x_0,w>, \quad v=1.$$
By (\ref{twistedLaplacian2}),
\begin{eqnarray*}
\triangle_f u&=& <H,w>-\frac{1}{2t_0}<(x-x_0)^\top,w>
\\&=&-\frac{1}{2t_0}<x-x_0,w>.
\end{eqnarray*}
Therefore, (1) follows from (\ref{integrationbyparts}).

(2) Let
$$u=|x-x_0|^2, \quad v=1.$$
The identity then follows from (\ref{integrationbyparts}) and
\begin{eqnarray*}
\triangle_f u&=&2n+2<x-x_0,H>-\frac{1}{t_0}|(x-x_0)^\top|^2
\\&=&2n-\frac{1}{t_0}|x-x_0|^2.
\end{eqnarray*}

(3) Let
$$u=v=|x-x_0|^2.$$
Then
$$\triangle_f u=2n-\frac{1}{t_0}|x-x_0|^2, \quad |\nabla u|^2=4|(x-x_0)^\top|^2.$$
Hence it follows from (\ref{integrationbyparts}) that
\begin{eqnarray*}
&&\int_\Sigma (2n-\frac{1}{t_0}|x-x_0|^2)|x-x_0|^2e^{-\frac{|x-x_0|^2}{4t_0}}d\mu
\\&=&-\int_\Sigma 4|(x-x_0)^\top|^2e^{-\frac{|x-x_0|^2}{4t_0}}d\mu
\\&=&-\int_\Sigma 4(|x-x_0|^2-4t_0^2|H|^2)e^{-\frac{|x-x_0|^2}{4t_0}}d\mu.
\end{eqnarray*}
By using (2), we get
\begin{eqnarray*}
&&\int_\Sigma |x-x_0|^4e^{-\frac{|x-x_0|^2}{4t_0}}d\mu
\\&=&\int_\Sigma ((2n+4)t_0|x-x_0|^2-16t_0^3|H|^2)e^{-\frac{|x-x_0|^2}{4t_0}}d\mu
\\&=&\int_\Sigma (4n(n+2)t_0^2-16t_0^3|H|^2)e^{-\frac{|x-x_0|^2}{4t_0}}d\mu.
\end{eqnarray*}

(4) Let
$$u=v=<x-x_0,w>.$$
We have
$$\triangle_f u=-\frac{1}{2t_0}<x-x_0, w>, \quad \nabla u=w^\top.$$
Hence
\begin{eqnarray*}
\int_\Sigma -\frac{1}{2t_0}<x-x_0,w>^2e^{-\frac{|x-x_0|^2}{4t_0}}d\mu
=-\int_\Sigma |w^\top|^2e^{-\frac{|x-x_0|^2}{4t_0}}d\mu.
\end{eqnarray*}

(5, 6)  Let
$$u=<x-x_0,w>, \quad v=|x-x_0|^2.$$
We have
$$\triangle_f u=-\frac{1}{2t_0}<x-x_0, w>, \quad \nabla u=w^\top$$
and
$$\triangle_f v=2n-\frac{1}{t_0}|x-x_0|^2,\quad   \nabla v=2(x-x_0)^\top.$$
Hence by (\ref{integrationbyparts}), we get
\begin{eqnarray*}
&&\int_\Sigma <x-x_0,w>(2n-\frac{1}{t_0}|x-x_0|^2)e^{-\frac{|x-x_0|^2}{4t_0}}d\mu
\\&=&\int_\Sigma -<w^\top,2(x-x_0)^\top>e^{-\frac{|x-x_0|^2}{4t_0}}d\mu
\\&=&\int_\Sigma -\frac{1}{2t_0}<x-x_0,w>|x-x_0|^2e^{-\frac{|x-x_0|^2}{4t_0}}d\mu.
\end{eqnarray*}
Then it follows from (1) that
\begin{eqnarray*}
&&\int_\Sigma -\frac{1}{t_0}<x-x_0,w>|x-x_0|^2e^{-\frac{|x-x_0|^2}{4t_0}}d\mu
\\&=&\int_\Sigma -\frac{1}{2t_0}<x-x_0,w>|x-x_0|^2e^{-\frac{|x-x_0|^2}{4t_0}}d\mu.
\end{eqnarray*}
Hence,
$$\int_\Sigma |x-x_0|^2<x-x_0,w>e^{-\frac{|x-x_0|^2}{4t_0}}d\mu=\int_\Sigma <(x-x_0)^\top,w>e^{-\frac{|x-x_0|^2}{4t_0}}d\mu=0.$$
\end{proof}

For an immersed $n$-dimensional surface $\Sigma$ in $\mathbb{R}^N$, the F-functional (with respect to $x_0\in \mathbb{R}^N, t_0>0$)
and the entropy \cite{CM} are respectively defined by
$$F_{x_0,t_0}(\Sigma)=\int_\Sigma (4\pi t_0)^{-\frac{n}{2}}e^{-\frac{|x-x_0|^2}{4t_0}}d\mu$$
and
\begin{equation*}
\lambda(\Sigma)=\sup_{x_0,t_0}F_{x_0,t_0}(\Sigma).
\end{equation*}
The relation between the F-functional and Huisken's monotonicity quantity is given by
\begin{equation}\label{FPhi}
F_{x_0,t_0}(\Sigma_{t_1})=\Phi_{x_0,t_0+t_1}(\Sigma_{t_1}),
\end{equation}
here $\Sigma_t$ is a $1$-parameter family of immersions.  The entropy functional has very nice properties, for example
(i) $\lambda$ is invariant under dilations and rigid motions; (ii) $\lambda$ is non-increasing along the mean curvature flow.
The property (ii) follows from (\ref{FPhi}) and Huisken's monotonicity formula.
One easily sees that the surface $\widetilde{\Sigma}_s^{\lambda_j}$ in (\ref{blowup}) satisfies
$\lambda(\widetilde{\Sigma}_s^{\lambda_j})\leq \lambda(\Sigma_0)$.
We now compute the first variation formula of the F-functional.

\begin{proposition}
Let $\Sigma$ be a complete immersed surface in $\mathbb{R}^N$ and $\Sigma_s$ a family of deformations of $\Sigma$
generated by normal variation $X_s$, $s\in (-\epsilon,\epsilon)$.
Let $x_s, t_s$ be deformations of $x_0$ and $t_0$ respectively with velocities $\dot{x}_s$ and $\dot{t}_s$,
then $\frac{d}{d s}F_{x_s,t_s}(\Sigma_s)$ is given by
\begin{equation*}\label{firstvariation1}
\int_{\Sigma_s}[-<X_s,H+\frac{x-x_s}{2t_s}>+\frac{1}{2t_s}<x-x_s,\dot{x}_s>
+\dot{t}_s(\frac{|x-x_s|^2}{4t_s^2}-\frac{n}{2t_s})](4\pi t_s)^{-\frac{n}{2}}e^{-\frac{|x-x_s|^2}{4t_s}}d\mu_s.
\end{equation*}
In particular if $X_s|_{s=0}=X, \dot{x}_0=y, \dot{t}_0=h$, then $\frac{d}{d s}|_{s=0}F_{x_s,t_s}(\Sigma_s)$ is given by
\begin{equation*}\label{firstvariation2}
\int_{\Sigma}[-<X,H+\frac{x-x_0}{2t_0}>+\frac{1}{2t_0}<x-x_0,y>
+h(\frac{|x-x_0|^2}{4t_0^2}-\frac{n}{2t_0})] (4\pi t_0)^{-\frac{n}{2}}e^{-\frac{|x-x_0|^2}{4t_0}}d\mu.
\end{equation*}
\end{proposition}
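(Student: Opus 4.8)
The plan is to differentiate under the integral sign and then collect the resulting terms according to the three independent directions of variation: the surface velocity $X_s$, the center velocity $\dot{x}_s$, and the scale velocity $\dot{t}_s$. Parametrizing $\Sigma_s$ over a fixed domain by an immersion $x(\cdot,s)$ with $\partial_s x=X_s$, I write
\[
F_{x_s,t_s}(\Sigma_s)=\int_\Sigma \Psi_s\, d\mu_s, \qquad \Psi_s:=(4\pi t_s)^{-\frac{n}{2}}\,e^{-\frac{|x-x_s|^2}{4t_s}},
\]
so that the $s$-dependence enters in two places: through the weight $\Psi_s$ (via $x$, $x_s$ and $t_s$) and through the induced area element $d\mu_s$.

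First I would record the first variation of the area element. With the convention $H=(e_ie_i)^\perp$ fixed above, the standard first variation of area formula gives $\partial_s(d\mu_s)=-\langle X_s,H\rangle\, d\mu_s$; the tangential divergence term that appears for a general variation drops out because $X_s$ is normal. Next I would differentiate the weight logarithmically. Using $\partial_s|x-x_s|^2=2\langle x-x_s,\,X_s-\dot{x}_s\rangle$ and keeping track of the $t_s$ that sits both in the normalizing factor and in the exponent, one obtains
\[
\frac{\partial_s\Psi_s}{\Psi_s}
=-\frac{n\dot{t}_s}{2t_s}
-\frac{\langle x-x_s,X_s\rangle}{2t_s}
+\frac{\langle x-x_s,\dot{x}_s\rangle}{2t_s}
+\frac{|x-x_s|^2}{4t_s^2}\,\dot{t}_s .
\]

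Then I would assemble $\partial_sF=\int_\Sigma\bigl(\tfrac{\partial_s\Psi_s}{\Psi_s}-\langle X_s,H\rangle\bigr)\Psi_s\,d\mu_s$ and group the terms. The area contribution $-\langle X_s,H\rangle$ combines with $-\tfrac{1}{2t_s}\langle x-x_s,X_s\rangle$ into $-\langle X_s,\,H+\tfrac{x-x_s}{2t_s}\rangle$; since $X_s$ is normal one has $\langle X_s,\tfrac{x-x_s}{2t_s}\rangle=\langle X_s,\tfrac{(x-x_s)^\perp}{2t_s}\rangle$, so the un-projected form is legitimate. The $\dot{x}_s$-term is already $\tfrac{1}{2t_s}\langle x-x_s,\dot{x}_s\rangle$, and the two $\dot{t}_s$-terms merge into $\dot{t}_s\bigl(\tfrac{|x-x_s|^2}{4t_s^2}-\tfrac{n}{2t_s}\bigr)$. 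Restoring $\Psi_s\,d\mu_s=(4\pi t_s)^{-n/2}e^{-|x-x_s|^2/(4t_s)}\,d\mu_s$ gives exactly the asserted integral, and the special case follows by setting $X_0=X$, $\dot{x}_0=y$, $\dot{t}_0=h$.

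The computation is essentially routine; the points that require attention are the sign in the first variation of the area element and the bookkeeping of $t_s$, which appears twice. The one genuine technical step is the \emph{justification of differentiating under the integral sign} for a noncompact complete $\Sigma$. I would handle this by exploiting the Gaussian weight, whose exponential decay, combined with polynomial volume growth of $\Sigma$, furnishes an $s$-uniform integrable majorant for the integrand and its $s$-derivative on a small interval about $s=0$, so that dominated convergence applies.
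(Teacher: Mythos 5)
Your proposal is correct and follows essentially the same route as the paper: compute $\frac{d}{ds}d\mu_s=-\langle X_s,H\rangle\,d\mu_s$ for the normal variation, differentiate the Gaussian weight in $x$, $x_s$ and $t_s$, and regroup the terms; your logarithmic-derivative bookkeeping reproduces exactly the paper's two displayed identities. The only addition is your remark justifying differentiation under the integral via the Gaussian decay and polynomial volume growth, a point the paper leaves implicit.
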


\begin{proof}
By definition, $F_{x_s,t_s}(\Sigma_s)=\int_{\Sigma_s}(4\pi t_s)^{-\frac{n}{2}}e^{-\frac{|x-x_s|^2}{4t_s}}d\mu_s$.
The first variation formula of the F-functional then follows from
$$\frac{d}{d s}d\mu_s=-<X_s,H>d\mu_s$$
$$\frac{d}{d s}[(4\pi t_s)^{-\frac{n}{2}}e^{-\frac{|x-x_s|^2}{4t_s}}]
=(-\frac{n}{2t_s}\dot{t}_s+\frac{|x-x_s|^2}{4t_s^2}\dot{t}_s-\frac{1}{2t_s}<x-x_s,X_s-\dot{x}_s>)(4\pi t_s)^{-\frac{n}{2}}e^{-\frac{|x-x_s|^2}{4t_s}}.$$
\end{proof}

From the first variation formula, we see a critical point of $F_{x_0,t_0}$ is a self-shrinker that becomes extinct at $(x_0,t_0)$.
Moreover if $\Sigma$ is a self-shrinker that becomes extinct at $(x_0,t_0)$,
then by Lemma \ref{Lemma} one sees that $(\Sigma,x_0,t_0)$ is a critical point of the F-functional,
which also means that $\Sigma$ is a critical point of the entropy.
We now compute the second variation of the $F$-functional at a self-shrinker.
For a normal vector field $X$, denote
\begin{equation}\label{Jacobioperator}
LX=\triangle X+<X,h_{ij}>h_{ij}-<\frac{x-x_0}{2t_0},e_i>\nabla_{e_i}X+\frac{1}{2t_0}X.
\end{equation}

\begin{theorem}\label{secondvariation}
Let $\Sigma$ be a self-shrinker which becomes extinct at $(x_0,t_0)$ and has polynomial volume growth.
Assume $\Sigma_s, x_s$ and $t_s$ are deformations of $\Sigma, x_0$ and $t_0$ respectively with
$$\frac{\partial \Sigma_s}{\partial s}|_{s=0}=X, \quad \frac{\partial x_s}{\partial s}|_{s=0}=y, \quad \frac{\partial t_s}{\partial s}|_{s=0}=h.$$
Then
\begin{eqnarray*}
\frac{d^2}{d^2s}|_{s=0}F_{x_s,t_s}(\Sigma_s)&=&\int_{\Sigma}-<X,L X-\frac{1}{t_0}y+\frac{2}{t_0}hH>(4\pi t_0)^{-\frac{n}{2}}e^{-\frac{|x-x_0|^2}{4t_0}}d\mu
\\&&-\int_{\Sigma}[\frac{1}{2t_0}|y^\perp|^2+\frac{1}{t_0}h^2|H|^2] (4\pi t_0)^{-\frac{n}{2}}e^{-\frac{|x-x_0|^2}{4t_0}}d\mu.
\end{eqnarray*}
\end{theorem}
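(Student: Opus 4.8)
The plan is to differentiate the first variation formula of the preceding Proposition a second time and evaluate at $s=0$, simplifying throughout with the self-shrinker equation (\ref{selfshrinker3}) and the integral identities of Lemma \ref{Lemma}. Writing the first variation as $\frac{d}{ds}F_{x_s,t_s}(\Sigma_s)=\int_{\Sigma_s}G_s\,\rho_s\,d\mu_s$ with $\rho_s=(4\pi t_s)^{-n/2}e^{-|x-x_s|^2/4t_s}$ and $G_s$ the bracketed integrand, I would split $\frac{d^2}{ds^2}F_{x_s,t_s}(\Sigma_s)|_{s=0}$ into the part where $\frac{d}{ds}$ falls on the measure $\rho_s\,d\mu_s$ and the part where it falls on the coefficient $G_s$.

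For the measure part, note that on the shrinker $\langle X,H+\frac{x-x_0}{2t_0}\rangle=\langle X,\frac{(x-x_0)^\top}{2t_0}\rangle=0$ for normal $X$, so $G_0=\frac{1}{2t_0}\langle x-x_0,y\rangle+h(\frac{|x-x_0|^2}{4t_0^2}-\frac{n}{2t_0})$. Combining $\frac{d}{ds}d\mu_s=-\langle X_s,H\rangle d\mu_s$ with the formula for $\frac{d}{ds}\rho_s$ from the proof of the Proposition and using (\ref{selfshrinker3}), I would show $\frac{d}{ds}(\rho_s\,d\mu_s)|_{s=0}=G_0\,\rho\,d\mu$, so the measure part equals $\int_\Sigma G_0^2\,\rho\,d\mu$. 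Expanding $G_0^2$ and integrating with Lemma \ref{Lemma} --- the cross term vanishes by $(1)$ and $(5)$, the square $\langle x-x_0,y\rangle^2$ yields $2t_0|y^\top|^2$ by $(4)$, and the $|x-x_0|^4$ and $|x-x_0|^2$ terms are reduced by $(3)$ and $(2)$ --- produces $\frac{1}{2t_0}\int|y^\top|^2\rho+\frac{nh^2}{2t_0^2}\int\rho-\frac{h^2}{t_0}\int|H|^2\rho$.

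For the coefficient part I would differentiate $G_s=A_s+B_s+C_s$ term by term. The contributions carrying the accelerations $\ddot x_0,\ddot t_0$ integrate to zero by Lemma \ref{Lemma}$(1)$,$(2)$, and the remaining purely algebraic terms, simplified using $(x-x_0)^\perp=-2t_0H$ from (\ref{selfshrinker3}), furnish exactly the lower-order pieces $-\frac{1}{2t_0}|X|^2$, $\frac{1}{t_0}\langle X,y\rangle$, $-\frac{2h}{t_0}\langle X,H\rangle$, $-\frac{|y|^2}{2t_0}$ and $-\frac{nh^2}{2t_0^2}$. The substantive term is $-\langle X,\dot H\rangle$. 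Writing the mean curvature vector as the Laplace-Beltrami operator of the position, $H=\Delta_{g_s}x$, I would compute $\dot H=\Delta X+\langle X,h_{ij}\rangle h_{ij}+(\text{terms from }\dot g_{ij}=-2\langle X,h_{ij}\rangle)$, where $\Delta X$ is the full Bochner Laplacian of the $\mathbb{R}^N$-valued field $X$. Integrating $-\int\langle X,\Delta X\rangle\rho$ by parts against the Gaussian weight via (\ref{integrationbyparts}) produces $\int|\nabla X|^2\rho$ together with the drift term $-\langle\frac{x-x_0}{2t_0},e_i\rangle\nabla_{e_i}X$ of (\ref{Jacobioperator}); collecting everything with the Simons term gives $-\int\langle X,LX\rangle\rho$.

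Summing the two parts, the $\frac{nh^2}{2t_0^2}$ contributions cancel and $-\frac{|y|^2}{2t_0}+\frac{1}{2t_0}|y^\top|^2=-\frac{1}{2t_0}|y^\perp|^2$, while the $-\frac{h^2}{t_0}|H|^2$ term carries through, giving the asserted formula. I expect the main obstacle to be the computation of $\dot H$: one must carry out the Simons-type variation of the mean curvature vector in arbitrary codimension and, above all, reconcile the tangential reparametrization generated by keeping the variation normal (the term $-\langle\dot X,\frac{(x-x_0)^\top}{2t_0}\rangle$) with the first-order remainder of the weighted integration by parts, so that these tangential contributions cancel and the clean drift operator $L$ of (\ref{Jacobioperator}) emerges.
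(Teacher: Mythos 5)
Your proposal is correct and follows essentially the paper's own proof: differentiate the first variation formula, observe that on the shrinker the derivative of the weighted measure reproduces the bracket $G_0$ so the measure part is $\int_\Sigma G_0^2\rho\,d\mu$ (the paper's fourth line), discard the $\ddot{x}_s,\ddot{t}_s$ terms by Lemma \ref{Lemma}(1),(2), and reduce every remaining Gaussian integral by Lemma \ref{Lemma}(1)--(5), which yields exactly the terms and cancellations you list. The one cosmetic difference is the drift term: the paper performs no integration by parts anywhere --- differentiating the normality relation $\langle X_s,e_i\rangle=0$ gives $\langle\dot X,e_i\rangle=-\langle X,\nabla_{e_i}X\rangle$, so the reparametrization term $-\langle\dot X,\tfrac{(x-x_0)^\top}{2t_0}\rangle$ is \emph{pointwise} equal to the drift term $+\langle X,\tfrac{x-x_0}{2t_0}\cdot\nabla X\rangle$ appearing in $-\langle X,LX\rangle$, with nothing left over to cancel, whereas your integration-by-parts round trip (producing $\int|\nabla X|^2\rho-\int\langle X,\tfrac{x-x_0}{2t_0}\cdot\nabla X\rangle\rho$, whose drift remainder is then cancelled by the $\dot X$ contribution) reaches the same operator $L$ consistently but redundantly, and hinges on that same normality identity.
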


\begin{proof}
For convenience, we write
$$F''=\frac{d^2}{d^2s}|_{s=0}F_{x_s,t_s}(\Sigma_s), \quad G_s(x)=(4\pi t_s)^{-\frac{n}{2}}e^{-\frac{|x-x_s|^2}{4t_s}}, \quad G(x)=(4\pi t_0)^{-\frac{n}{2}}e^{-\frac{|x-x_0|^2}{4t_0}}.$$
It follows from the first variation formula of the F-functional that $F''$ is
\begin{eqnarray*}
\frac{d}{d s}|_{s=0}\int_{\Sigma_s}[-<X_s,H+\frac{x-x_s}{2t_s}>+\frac{1}{2t_s}<x-x_s,\dot{x}_s>
+\dot{t}_s(\frac{|x-x_s|^2}{4t_s^2}-\frac{n}{2t_s})]G_sd\mu_s.
\end{eqnarray*}
Under the normal deformation with $\frac{\partial \Sigma_s}{\partial s}=X$, we have
$$\frac{\partial}{\partial s}g_{ij}=-2<X,h_{ij}>$$
and
\begin{eqnarray*}
(\frac{\partial}{\partial s}H)^\perp=\triangle X+<X,h_{ij}>h_{ij}.
\end{eqnarray*}
Then it follows from Lemma \ref{Lemma} that
\begin{eqnarray*}
F''&=&\int_{\Sigma}-[<\frac{\partial X_s}{\partial s}|_{s=0},\frac{(x-x_0)^\top}{2t_0}>+<X,\triangle X>+<X,h_{ij}>^2]Gd\mu
\\&&+\int_{\Sigma}[-<X,\frac{X-y}{2t_0}>+<X,\frac{h(x-x_0)}{2t_0^2}>+\frac{1}{2t_0}<X-y,y>]Gd\mu
\\&&+\int_{\Sigma}h(-\frac{|x-x_0|^2h}{2t_0^3}+\frac{<x-x_0,X-y>}{2t_0^2}+\frac{nh}{2t_0^2})Gd\mu
\\&&+\int_{\Sigma}[\frac{1}{2t_0}<x-x_0,y>+h(\frac{|x-x_0|^2}{4t_0^2}-\frac{n}{2t_0})]^2Gd\mu.
\end{eqnarray*}
Note that
$$<\frac{\partial X}{\partial s}|_{s=0},\frac{(x-x_0)^\top}{2t_0}>=-<\frac{x-x_0}{2t_0},e_i><X,\nabla_{e_i}X>.$$
We denote
$$<\frac{x-x_0}{2t_0},e_i>\nabla_{e_i}X=\frac{x-x_0}{2t_0}\cdot\nabla X.$$
By the assumption on $\Sigma$ and Lemma \ref{Lemma},
\begin{eqnarray*}
F''&=&\int_{\Sigma}-[-<X,\frac{x-x_0}{2t_0}\cdot\nabla X>+<X,\triangle X>+<X,h_{ij}>^2]Gd\mu
\\&&+\int_{\Sigma}[-\frac{1}{2t_0}|X|^2+\frac{1}{t_0}<X,y>-\frac{1}{2t_0}|y|^2-\frac{2h}{t_0}<X,H>-\frac{nh^2}{2t_0^2}] Gd\mu
\\&&+\int_{\Sigma}[\frac{1}{2t_0}<x-x_0,y>+h(\frac{|x-x_0|^2}{4t_0^2}-\frac{n}{2t_0})]^2Gd\mu.
\end{eqnarray*}
By Lemma \ref{Lemma} again,
\begin{eqnarray*}
&&\int_{\Sigma}[\frac{1}{2t_0}<x-x_0,y>+h(\frac{|x-x_0|^2}{4t_0^2}-\frac{n}{2t_0})]^2Gd\mu
\\&=&\int_{\Sigma}[\frac{1}{4t_0^2}<x-x_0,y>^2+h^2(\frac{|x-x_0|^2}{4t_0^2}-\frac{n}{2t_0})^2]Gd\mu
\\&=&\int_{\Sigma}[\frac{1}{2t_0}|y^\top|^2+h^2(\frac{n}{2t_0^2}-\frac{1}{t_0}|H|^2)]Gd\mu.
\end{eqnarray*}
Hence we get
\begin{eqnarray*}
F''&=&\int_{\Sigma}-<X,\triangle X+<X,h_{ij}>h_{ij}-\frac{x-x_0}{2t_0}\cdot\nabla X+\frac{1}{2t_0}X> Gd\mu
\\&&+\int_{\Sigma}<X,\frac{1}{t_0}y-\frac{2}{t_0}h H> Gd\mu-\int_{\Sigma}[\frac{1}{2t_0}|y^\perp|^2+\frac{1}{t_0}h^2|H|^2)]Gd\mu.
\end{eqnarray*}
\end{proof}

Without loss of generality, from now on we assume $x_0=0, t_0=1$.
In particular a self-shrinker that becomes extinct at $(0,1)$ satisfies $H=-\frac{1}{2}x^\perp$ and the Jacobi operator $L$ becomes
$$LX=\triangle X+<X,h_{ij}>h_{ij}-<\frac{x}{2},e_i>\nabla_{e_i}X+\frac{1}{2}X.$$
Let $L_f^2$ be the Hilbert space consisting of all functions square integrable with respect to the measure $e^{-\frac{|x|^2}{4}}d\mu$.
When referring to a normal variation field $X$, we assume $|X|, |\nabla X|$ and $|LX|$ are in $L_f^2$.
By the second variation formula of the F-functional, we arrive at the following definition of F-stability \cite{CM}.

\begin{definition}\label{Fstability}
A self-shrinker $\Sigma$ which becomes extinct at $(0,1)$ and has polynomial volume growth
is called F-stable if for any normal variation $X$,
there exist a vector $y$ and a constant $h$ such that
\begin{eqnarray*}
F''=(4\pi)^{-\frac{n}{2}}\int_{\Sigma}[-<X,L X>+<X,y>-2h<X,H>-\frac{1}{2}|y^\perp|^2-h^2|H|^2]e^{-\frac{|x|^2}{4}}d\mu\geq 0.
\end{eqnarray*}
\end{definition}

A normal vector field $X$ is called an eigenvector field of $L$ and of eigenvalue $\mu$ if there exists a constant $\mu$ such that $LX=-\mu X$.
Note that $L$ is self-adjoint with respect to the measure $e^{-\frac{|x|^2}{4}}d\mu$.
We now show that $H$ and $w^\perp$ are eigenvector fields of $L$, here $w$ is any vector in $\mathbb{R}^N$.
We also show that a self-shrinker is F-stable if and only if $H$ and $w^\perp$ are the only eigenvector fields which have negative eigenvalues.
Note that for a self-shrinker the variations $H$ and $w^\perp$ correspond to dilation and translation respectively.

\begin{lemma}\label{eigenvectorfields}
Let $\Sigma$ be a self-shrinker that becomes extinct at $(0,1)$ and $w$ a vector in $\mathbb{R}^N$. Then
$$LH=H,\quad Lw^\perp=\frac{1}{2}w^\perp.$$
\end{lemma}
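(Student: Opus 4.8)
The plan is to prove both identities by direct computation from the self-shrinker equation $H=-\frac{1}{2}x^\perp$, using only the Codazzi equation (which in the flat ambient space makes $\nabla^\perp h$ totally symmetric) and the flatness of the ambient connection $D$. Throughout I would fix a point $p\in\Sigma$ and work in a local orthonormal frame $\{e_i\}$ that is normal at $p$, so that $\nabla_{e_i}e_j=0$ and $D_{e_i}e_j=h_{ij}$ there; here $D$ denotes the flat ambient derivative and $\nabla^\perp$ the normal connection, which is the $\nabla$ appearing in the definition of $L$ when acting on a normal field. This choice reduces every covariant derivative to an ambient derivative modulo projections.

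First I would record the two first-order identities that drive the whole argument. Since $w$ is parallel in $\mathbb{R}^N$, differentiating $w^\perp=w-w^\top$ and taking the normal part gives $\nabla^\perp_{e_i}w^\perp=-<w,e_j>h_{ij}$. The same computation applied to the position map $x$, using $D_{e_i}x=e_i$ together with $H=-\frac12 x^\perp$, yields $\nabla^\perp_{e_i}H=-\frac12\nabla^\perp_{e_i}x^\perp=\frac12<x,e_j>h_{ij}$.

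Next I would differentiate once more to obtain the normal Laplacians. For $w^\perp$, applying $\nabla^\perp_{e_i}$ to $-<w,e_j>h_{ij}$ and summing in $i$ produces two kinds of terms: a curvature-type term $-<w^\perp,h_{ij}>h_{ij}$ coming from $e_i<w,e_j>=<w,h_{ij}>$, and a term $-<w,e_j>\nabla^\perp_{e_j}H$ coming from the Codazzi identity $\sum_i\nabla^\perp_{e_i}h_{ij}=\nabla^\perp_{e_j}H$; substituting the first-order formula turns the latter into $-\frac12<w,e_j><x,e_k>h_{jk}$. The analogous computation for $H$, using $e_i<x,e_j>=\delta_{ij}+<x,h_{ij}>$ and then the self-shrinker relation $<x^\perp,h_{ij}>=-2<H,h_{ij}>$, gives $\triangle H=\frac12 H-<H,h_{ij}>h_{ij}+\frac14<x,e_j><x,e_k>h_{jk}$.

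Finally I would assemble $L$ term by term. For $w^\perp$, the term $<w^\perp,h_{ij}>h_{ij}$ in $L$ exactly cancels the curvature term in $\triangle w^\perp$, while the drift term $-\frac12<x,e_i>\nabla^\perp_{e_i}w^\perp=\frac12<x,e_i><w,e_j>h_{ij}$ cancels the remaining double sum by symmetry of $h_{ij}$, leaving $Lw^\perp=\frac12 w^\perp$. For $H$, the $<H,h_{ij}>h_{ij}$ terms cancel, the two double sums coming from $\triangle H$ and from the drift term $-\frac14<x,e_i><x,e_j>h_{ij}$ cancel, and the surviving $\frac12 H+\frac12 H$ gives $LH=H$. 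I expect the only real subtlety to be the careful bookkeeping of tangential versus normal parts and the correct sign and symmetrization in the Codazzi step; once the two first-order identities are established, the remaining cancellations are forced and purely algebraic.
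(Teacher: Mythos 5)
Your proposal is correct and follows essentially the same route as the paper: both compute $\nabla^\perp H=\frac12\langle x,e_l\rangle h_{il}$ and $\nabla^\perp w^\perp=-\langle w,e_l\rangle h_{il}$ from the shrinker equation, pass to the Laplacians via the Codazzi symmetry $\nabla_i h_{il}=\nabla_l H$ and the substitution $\langle x,h_{il}\rangle=-2\langle H,h_{il}\rangle$, and then assemble $L$ so that the curvature and drift terms cancel. The only cosmetic difference is that the paper leaves the term $\frac12\langle x,e_l\rangle\nabla_l H$ unexpanded so it cancels the drift term directly, whereas you expand it first; the cancellation is identical.
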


\begin{proof}
Let $\nu_\alpha$ be a local orthonormal frame on the normal bundle of $\Sigma$ and normal at the point under consideration.

Write $H=h_{kk\alpha}\nu_\alpha=-\frac{1}{2}<x,\nu_\alpha>\nu_\alpha.$
Then
$$\nabla_iH=\frac{1}{2}h_{il\alpha}<x,e_l>\nu_\alpha=\frac{1}{2}<x,e_l>h_{il}$$
and
\begin{eqnarray*}
\triangle H&=&\frac{1}{2}(H+<x,h_{il}>h_{il}+<x,e_l>\nabla_lH)
\\&=&\frac{1}{2}H-<H,h_{il}>h_{il}+\frac{1}{2}<x,e_l>\nabla_lH.
\end{eqnarray*}
Hence we get
$$L H=\triangle H+<H,h_{ij}>h_{ij}-<\frac{x}{2},e_i>\nabla_iH+\frac{1}{2}H=H.$$

Write $w^\perp=<w,\nu_\alpha>\nu_\alpha$. Then
$$\nabla_iw^\perp=-h_{il\alpha}<w,e_l>\nu_\alpha=-<w,e_l>h_{il},$$
and
\begin{eqnarray*}
\triangle w^\perp&=&-<w,h_{il}>h_{il}-<w,e_l>\nabla_l H
\\&=&-<w,h_{il}>h_{il}-\frac{1}{2}<w,e_l><x,e_k>h_{lk}
\\&=&-<w,h_{il}>h_{il}+\frac{1}{2}<x,e_k>\nabla_kw^\perp.
\end{eqnarray*}
Hence we get $Lw^\perp=\frac{1}{2}w^\perp$.
\end{proof}

\begin{theorem}\label{characterization}
A self-shrinker $\Sigma$ is $F$-stable if and only if $H$ and $w^\perp$ are the only eigenvector fields of $L$
which have negative eigenvalues.
\end{theorem}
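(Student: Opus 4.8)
The plan is to eliminate the auxiliary parameters $y$ and $h$ by optimization, turning the definition of $F$-stability into a single coercivity condition on the self-adjoint operator $L$, and then to read off the characterization from the eigenspace decomposition of $L$. Throughout write $(X,Y)_f=\int_\Sigma <X,Y>e^{-\frac{|x|^2}{4}}d\mu$ for the weighted inner product on normal fields, $\|X\|_f^2=(X,X)_f$, let $V=\{w^\perp:w\in\mathbb R^N\}$, and let $P_H$, $P_V$ denote the $(\cdot,\cdot)_f$-orthogonal projections onto $\mathbb R H$ and onto $V$. For fixed $X$ the bracket in Definition \ref{Fstability}, namely $(4\pi)^{\frac{n}{2}}F''$, is a concave quadratic in $(y,h)$: since $X$ is normal one has $<X,y>=<X,y^\perp>$, so the $h$-part is $-2h(X,H)_f-h^2\|H\|_f^2$ and the $y$-part is $\sum_a y_a(X,E_a^\perp)_f-\tfrac12\sum_{a,b}y_ay_b(E_a^\perp,E_b^\perp)_f$ for the standard basis $\{E_a\}$, with no $yh$ cross term. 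Completing the square in $h$ contributes $\tfrac{(X,H)_f^2}{\|H\|_f^2}=\|P_HX\|_f^2$, and maximizing the positive semidefinite $y$-quadratic contributes $\tfrac12\|P_VX\|_f^2$ (the semidefinite case is harmless, since the linear coefficients $(X,E_a^\perp)_f$ automatically annihilate the kernel of the Gram matrix). Hence
\[
\sup_{y,h}F''=(4\pi)^{-\frac{n}{2}}\Big[-(X,LX)_f+\|P_HX\|_f^2+\tfrac12\|P_VX\|_f^2\Big]=:(4\pi)^{-\frac{n}{2}}\mathcal F(X).
\]
Because the existence of $y,h$ with $F''\ge0$ is exactly the statement $\sup_{y,h}F''\ge0$, I conclude that $\Sigma$ is $F$-stable if and only if $\mathcal F(X)\ge0$ for every normal variation field $X$.

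Next I would invoke the spectral theory of $L$. By Lemma \ref{eigenvectorfields}, $H$ is an eigenvector field of eigenvalue $-1$ and each $w^\perp\in V$ is an eigenvector field of eigenvalue $-\tfrac12$; both are negative, and since $L$ is self-adjoint with respect to $(\cdot,\cdot)_f$ the eigenspaces $\mathbb R H$ and $V$ are mutually orthogonal. Because $\Sigma$ has polynomial volume growth, $L$ has discrete spectrum bounded below with a complete $(\cdot,\cdot)_f$-orthonormal system of eigenvector fields; in particular there are only finitely many negative eigenvalues, each of finite multiplicity, and any $X$ splits orthogonally as its negative-eigenvalue part plus a remainder $X_+$ lying in the span of the nonnegative-eigenvalue eigenfields, for which $-(X_+,LX_+)_f\ge0$.

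For $(\Leftarrow)$, the hypothesis says precisely that the negative eigenspaces of $L$ are $\mathbb R H$ (eigenvalue $-1$) and $V$ (eigenvalue $-\tfrac12$). Writing $X=P_HX+P_VX+X_+$ and using that $P_HX$, $P_VX$ are eigenfields of eigenvalues $-1,-\tfrac12$, orthogonality gives $-(X,LX)_f=-\|P_HX\|_f^2-\tfrac12\|P_VX\|_f^2-(X_+,LX_+)_f$, whence $\mathcal F(X)=-(X_+,LX_+)_f\ge0$, so $\Sigma$ is $F$-stable. For $(\Rightarrow)$, suppose some eigenvector field with negative eigenvalue is not accounted for by $\mathbb R H\oplus V$; subtracting its $\mathbb R H$- and $V$-components (which stays in the same eigenspace, hence keeps the eigenvalue) produces a nonzero eigenvector field $X_0$ with $LX_0=-\mu_0X_0$, $\mu_0<0$, and $P_HX_0=P_VX_0=0$. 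Then $\mathcal F(X_0)=-(X_0,LX_0)_f=\mu_0\|X_0\|_f^2<0$, contradicting $F$-stability. This establishes the equivalence.

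The main obstacle is twofold. First, the bookkeeping in the supremum computation: one must verify that completing the square in $h$ and in $y$ yields exactly $\|P_HX\|_f^2$ and $\tfrac12\|P_VX\|_f^2$, so that the correction coefficients $1$ and $\tfrac12$ match the eigenvalues $-1$ and $-\tfrac12$ of $H$ and $w^\perp$ (this precise matching is what makes the dilation and translation directions marginally, rather than strictly, compensated). Second, the functional-analytic input that $L$ has discrete spectrum with a complete orthonormal basis of eigenvector fields, which legitimizes the orthogonal splitting and the nonnegativity $-(X_+,LX_+)_f\ge0$; I would either cite this for the weighted Laplacian on self-shrinkers of polynomial volume growth (as in Colding and Minicozzi \cite{CM}) or indicate the standard Rellich-type compactness argument for the drift operator behind it.
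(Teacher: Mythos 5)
Your proof is correct and follows essentially the same route as the paper's: both rest on Lemma \ref{eigenvectorfields}, the self-adjointness of $L$, and the orthogonal splitting of $X$ along $\mathbb{R}H$, $V=\{w^\perp: w\in\mathbb{R}^N\}$ and their complement, and your closed-form supremum over $(y,h)$ is exactly what the paper realizes by choosing $y=w$, $h=-a_0$ after writing $X=a_0H+w^\perp+X_1$. The differences are only expository: you make explicit the degenerate Gram-matrix point and the spectral completeness of $L$ needed for the sufficiency direction, which the paper's ``we are done'' leaves implicit.
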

\begin{proof}
Assume $\Sigma$ is Lagrangian F-stable and $X$ is an eigenvector field with $LX=-\mu X$ such that
$$\int_\Sigma <X,H>e^{-\frac{|x|^2}{4}}d\mu=\int_\Sigma <X,w^\perp>e^{-\frac{|x|^2}{4}}d\mu=0, \quad \forall w\in\mathbb{R}^N.$$
Then
$$F''=(4\pi)^{-\frac{n}{2}}\int_{\Sigma}[\mu |X|^2-\frac{1}{2}|y^\perp|^2-h^2|H|^2]e^{-\frac{|x|^2}{4}}d\mu.$$
Therefore the F-stability of $\Sigma$ implies that $\mu\geq 0$.

On the other hand, any normal variation $X$ admit a unique decomposition $$X=a_0H+w^\perp+X_1$$ such that
$X_1$ is orthogonal to $H$ and all translations $y^\perp$.
Then by Lemma \ref{eigenvectorfields},
\begin{eqnarray*}
F''&=&(4\pi)^{-\frac{n}{2}}\int_{\Sigma}-<a_0H+w^\perp+X_1,a_0H+\frac{1}{2}w^\perp+L X_1>e^{-\frac{|x|^2}{4}}d\mu
\\&&+(4\pi)^{-\frac{n}{2}}\int_{\Sigma}[<a_0H+w^\perp+X_1,y>-2h<a_0H+w^\perp+X_1,H>]e^{-\frac{|x|^2}{4}}d\mu
\\&&-(4\pi)^{-\frac{n}{2}}\int_{\Sigma}(\frac{1}{2}|y^\perp|^2+h^2|H|^2)e^{-\frac{|x|^2}{4}}d\mu
\\&=&(4\pi)^{-\frac{n}{2}}\int_{\Sigma}[-<X_1,LX_1>-(a_0+h)^2|H|^2-\frac{1}{2}|y^\perp-w^\perp|^2]e^{-\frac{|x|^2}{4}}d\mu.
\end{eqnarray*}
Taking $h=-a_0$ and $y=w$, we are done.
\end{proof}

The characterization of F-stability was proved by Lee and Lue \cite{LeeLue}.
We get the characterization independently. The necessary part of the characterization was also proved in \cite{AndrewsLiWei}.
An interesting application of the characterization was found in \cite{LeeLue}, which states that
the product of two self-shrinkers which become extinct at the same $(x_0,t_0)$ is F-unstable.

A similar characterization for the linear stability of Ricci shrinkers was proved by Cao and Zhu \cite{CaoZhu},
and a similar characterization of the F-stability for self-similar solutions to the harmonic map heat flow was obtained in \cite{Zhang}.

\section{Lagrangian F-stability of closed Lagrangian self-shrinkers}

In this section we study Lagrangian F-stability of closed Lagrangian self-shrinkers.
Our starting point is a correspondence between Lagrangian variations and closed $1$-forms for Lagrangian surfaces,
which was also used by Oh \cite{Oh} in the studying of
Lagrangian stability of minimal Lagrangian submanifolds in K\"ahler manifolds.
Via this correspondence, we replace a Lagrangian variation by closed a $1$-form in the second variation formula of the F-functional.
Twisted harmonic $1$-form will play a crucial role in the proof of Theorem \ref{thm1}.
In particular, we will prove that any nontrivial twisted harmonic $1$-form in a different class from the Maslov class is an obstruction to the F-stability.

Let $(\mathbb{C}^n,\overline{g},J,\overline{\omega})$ be the complex Euclidean space with
the standard metric, complex structure and K\"ahler form such that
$\overline{g}=\overline{\omega}(\cdot,J\cdot)$.
An $n$-dimensional immersed surface $\Sigma$ is called a Lagrangian submanifold if $\overline{\omega}|_{T\Sigma}=0$.
Let $\Sigma$ be a Lagrangian surface in $\mathbb{C}^n$,
$\{e_i\}_{i=1}^n$ a local orthonormal frame of $T\Sigma$ and $\nu_i=Je_i$.
Note $\{\nu_i\}_{i=1}^n$ also form a local orthonormal frame of the normal bundle.
The second fundamental form is defined by
$$h_{ijk}=\overline{g}(\overline{\nabla}_{e_i}e_j,\nu_k),$$
which is symmetric in $i,j$ and $k$. The mean curvature vector field $H$ is defined by
$$H=H_k\nu_k=(\overline{\nabla}_{e_i}e_i)^\perp=h_{iik}\nu_k.$$

Let $\{e^i\}_{i=1}^n$ be the dual basis of $\{e_i\}_{i=1}^n$.
For any normal vector field $X=X_k\nu_k$, one can associated to it a $1$-form on $\Sigma$ by
$$\theta=-i_X\overline{\omega}=X_ke^k.$$
A normal vector field $X$ is called a Lagrangian variation if $i_X\overline{\omega}$ is closed.
It is easy to check that Lagrangian variations are infinitesimal Lagrangian deformations.
A normal variation field $X$ is called a Hamiltonian variation if $i_X\overline{\omega}$ is exact.
For example, the mean curvature vector field $H$ of a Lagrangian submanifold in $\mathbb{C}^n$ is a Lagrangian variation.
The corresponding closed $1$-form $-i_H\overline{\omega}$ is called the mean curvature form, still denoted by $H$, and the cohomology class
$[-i_H\overline{\omega}]$ is called the Maslov class.

For Lagrangian surfaces, the F-functional and the entropy are the same as the definitions in the last section.
The first variation formula of the F-functional in last section still holds
but $\Sigma_s$ is now replaced  by a Lagrangian deformation and $X_s$ is replaced by a Lagrangian variation.
Note that $H$ and $(x-x_0)^\perp$ are both Lagrangian variations.
Therefore a critical point of $F_{x_0,t_0}$ is a Lagrangian self-shrinker even when we are restricted to Lagrangian deformations.
Similarly by the second variation formula of the F-functional, we have the following definition of Lagrangian (resp. Hamiltonian) F-stability
for Lagrangian self-shrinkers.

\begin{definition}
Let $\Sigma$ be a complete Lagrangian self-shrinker that becomes extinct at $(0,1)$.
$\Sigma$ is called Lagrangian (resp. Hamiltonian) $F$-stable if for any Lagrangian (resp. Hamiltonian) variation $X$,
there exist a vector $y$ and a constant $h$ such that
\begin{eqnarray*}
F''=(4\pi)^{-\frac{n}{2}}\int_{\Sigma}[-<X,L X>+<X,y>-2h<X,H>-\frac{1}{2}|y^\perp|^2-h^2|H|^2]e^{-\frac{|x|^2}{4}}d\mu\geq 0.
\end{eqnarray*}
\end{definition}

We now rewrite the second variation in terms of the closed $1$-form $\theta=-i_X\overline{\omega}=X_ke^k$.
We first decompose the closed $1$-form into its harmonic part and exact part by
\begin{equation}\label{Hodgedecomposition}
\theta=\theta_0+du.
\end{equation}
Let $d^*$ be the adjoint operator of $d$ and $\triangle_H=d^*d+dd^*$ be the Hodge Laplacian.

\begin{proposition}\label{secondvariationofLagrangian}
Let $\Sigma$ be a closed Lagrangian self-shrinker that becomes extinct at $(0,1)$ and $X$ a Lagrangian variation.
Let $\theta=-i_X\overline{\omega}$ and $\theta=\theta_0+du$ be the decomposition (\ref{Hodgedecomposition}).
Then the second variation is given by
\begin{eqnarray}\label{secondvariationformulaLag}
F''&=&(4\pi)^{-\frac{n}{2}}\int_{\Sigma}[|d^*du+\frac{1}{2}du(x^\top)+\frac{1}{2}\theta_0(x^\top)|^2-|\theta|^2]e^{-\frac{|x|^2}{4}}d\mu\nonumber
\\&&+(4\pi)^{-\frac{n}{2}}\int_{\Sigma}[-\theta(Jy^\perp)-h\theta(Jx^\perp)-\frac{1}{2}|y^\perp|^2-\frac{1}{4}h^2|x^\perp|^2]e^{-\frac{|x|^2}{4}}d\mu.
\end{eqnarray}
\end{proposition}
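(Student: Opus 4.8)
The plan is to take the second variation formula of Definition \ref{Fstability} (i.e. Theorem \ref{secondvariation} at $(x_0,t_0)=(0,1)$) and transport every term through the correspondence $X=X_k\nu_k\leftrightarrow\theta=-i_X\overline{\omega}=X_ke^k$. The terms not involving $L$ are purely algebraic. Since $\{\nu_k\}$ is orthonormal, $|X|^2=|\theta|^2$; and using $\nu_k=Je_k$, $J\nu_k=-e_k$ together with $H=-\frac12x^\perp$ one checks directly that $<X,y>=-\theta(Jy^\perp)$, $<X,H>=\frac12\theta(Jx^\perp)$ and $|H|^2=\frac14|x^\perp|^2$. Substituting these reproduces exactly the second line of (\ref{secondvariationformulaLag}), so the whole proposition reduces to the single identity
$$\int_\Sigma -<X,LX>e^{-\frac{|x|^2}{4}}d\mu=\int_\Sigma\left[\left|d^*du+\tfrac12du(x^\top)+\tfrac12\theta_0(x^\top)\right|^2-|\theta|^2\right]e^{-\frac{|x|^2}{4}}d\mu.$$

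The crux is a connection-preserving identification of normal fields with $1$-forms. Because $J$ is parallel in $\mathbb{C}^n$, differentiating $\nu_k=Je_k$ gives $\overline{\nabla}_{e_i}\nu_k=\Gamma_{ik}^l\nu_l-h_{ikm}e_m$, so the normal connection is $\nabla^\perp_{e_i}\nu_k=\Gamma^l_{ik}\nu_l$; by the antisymmetry of the Christoffel symbols in an orthonormal frame this is precisely the Levi-Civita connection acting on $\{e^k\}$. Hence $X\mapsto\theta$ intertwines $\nabla^\perp$ on the normal bundle with $\nabla$ on $T^*\Sigma$, so the drift connection Laplacian $\triangle-\frac12\nabla^\perp_{x^\top}$ inside $L$ corresponds to the drift rough Laplacian $\tr\nabla^2-\frac12\nabla_{x^\top}$ on $\theta$. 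Writing $LX=(\triangle-\frac12\nabla^\perp_{x^\top})X+<X,h_{ij}>h_{ij}+\frac12X$, the task becomes to turn this drift rough Laplacian into a Hodge Laplacian.

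For this I would use the weighted Bochner (Bakry--\'Emery Weitzenb\"ock) formula on $1$-forms, $\triangle_{H,f}\theta=\nabla^*_f\nabla\theta+\ric_f(\theta)$ with $f=\frac{|x|^2}{4}$, $\nabla^*_f\nabla=\nabla^*\nabla+\nabla_{\nabla f}$ and $\ric_f=\ric+\nabla^2f$, where $\triangle_H=d^*d+dd^*$. I compute $\ric_f$ explicitly: the Gauss equation for a Lagrangian surface in flat $\mathbb{C}^n$ gives $\ric_{ij}=H_mh_{ijm}-h_{ikm}h_{jkm}$, and a direct computation of the Hessian of $\frac{|x|^2}{4}$ using the shrinker relation $<x,\nu_m>=-2H_m$ gives $(\nabla^2f)_{ij}=\frac12\delta_{ij}-H_mh_{ijm}$. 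The mean-curvature cross terms cancel and leave the Bakry--\'Emery Ricci tensor $(\ric_f)_{ij}=\frac12\delta_{ij}-h_{ikm}h_{jkm}$. The decisive cancellation is that the zeroth-order term $<X,h_{ij}>h_{ij}$ of $L$ contracts to $h_{ijk}h_{ijl}X_kX_l$, which by full symmetry of $h$ equals exactly the quadratic $h_{ikm}h_{jkm}X_iX_j$ occurring in $<\theta,\ric_f\theta>$; together with the $\frac12|X|^2$ from the $\frac12X$ term in $L$ and the $\frac12|X|^2$ from $\ric_f$, everything collapses to the pointwise identity $-<X,LX>=<\theta,\triangle_{H,f}\theta>-|\theta|^2$.

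It then remains to integrate against $e^{-f}d\mu$ and to recognize the first term. Since $\triangle_{H,f}=dd_f^*+d_f^*d$ is self-adjoint in $L^2(e^{-f}d\mu)$ with $d_f^*=d^*+i_{\nabla f}$, and since $X$ is a Lagrangian variation so that $\theta$ is closed, one gets $\int_\Sigma<\theta,\triangle_{H,f}\theta>e^{-f}d\mu=\int_\Sigma|d_f^*\theta|^2e^{-f}d\mu$. Inserting the Hodge decomposition $\theta=\theta_0+du$ with $\theta_0$ harmonic gives $d_f^*\theta=d^*du+\frac12du(x^\top)+\frac12\theta_0(x^\top)$, which is precisely the integrand of the first term, finishing the proof. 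I expect the main obstacle to be the third step, the curvature bookkeeping, where the Gauss equation, the Gaussian Hessian, and the self-shrinker equation must be combined simultaneously and every second-fundamental-form and mean-curvature term verified to cancel. Once $\ric_f$ has been isolated in the form above, the remainder is formal weighted Hodge theory.
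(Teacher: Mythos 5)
Your proof is correct, and it arrives at (\ref{secondvariationformulaLag}) by a genuinely different organization of the computation than the paper's. The paper stays with the \emph{unweighted} Hodge Laplacian: it applies the classical Weitzenb\"ock formula together with the Gauss equation and the shrinker equation (turning $<h_{kl},H>$ into $-\frac{1}{2}<x,h_{kl}>$), then eliminates the resulting drift terms $\frac{1}{2}<x,h_{kl}>X_kX_l$ and $X_k<\frac{x}{2},e_i>\nabla_iX_k$ by explicit weighted integration by parts using the Lagrangian symmetry $\nabla_iX_k=\nabla_kX_i$, and only at the very end completes a square to produce $|d^*du+\frac{1}{2}du(x^\top)+\frac{1}{2}\theta_0(x^\top)|^2$; the recognition of this square as $|d_f^*\theta|^2$ is recorded afterwards as Corollary \ref{secondvariationLag2}. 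You instead build the weight in from the start via the Bakry--\'Emery Weitzenb\"ock formula $\triangle_{H,f}=\nabla_f^*\nabla+\mathrm{Ric}+\nabla^2f$, so that the shrinker equation enters only through the Hessian $(\nabla^2f)_{ij}=\frac{1}{2}\delta_{ij}-H_mh_{ijm}$, the mean-curvature terms cancel against the Gauss equation \emph{pointwise} (here the full symmetry of $h_{ijk}$, special to the Lagrangian case, is exactly what matches $h_{ijk}h_{ijl}X_kX_l$ with the quadratic term in $\mathrm{Ric}_f$), and everything collapses to the pointwise identity $-<X,LX>=<\theta,\triangle_{H,f}\theta>-|\theta|^2$ before any integration; closedness of $\theta$ is then used just once, in the self-adjointness step $\int_\Sigma<\theta,\triangle_{H,f}\theta>e^{-f}d\mu=\int_\Sigma|d_f^*\theta|^2e^{-f}d\mu$. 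Your ancillary computations (the connection intertwining $\nabla^\perp_{e_i}\nu_k=\Gamma_{ik}^l\nu_l$, which the paper uses tacitly when it writes $LX=\triangle X_k\nu_k+\cdots$, and the algebraic identities $<X,y>=-\theta(Jy^\perp)$, $<X,H>=\frac{1}{2}\theta(Jx^\perp)$, $|H|^2=\frac{1}{4}|x^\perp|^2$) are all correct and agree with what the paper uses or implies. What your route buys: the cancellation is localized in $\mathrm{Ric}_f$, the appearance of $d_f^*$ is explained structurally by weighted Hodge theory rather than discovered by completing a square, and Corollary \ref{secondvariationLag2} comes for free. What the paper's route buys: it is more self-contained, needing only the classical Weitzenb\"ock formula and bare-hands integration by parts, with no appeal to (or verification of) the weighted Weitzenb\"ock identity.
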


\begin{proof}
For $X=X_k\nu_k$, we have
$$LX=\triangle X_k\nu_k+X_ph_{ijp}h_{ijk}\nu_k-<\frac{x}{2},e_i>\nabla_iX_k\nu_k+\frac{1}{2}X_k\nu_k$$
and
\begin{eqnarray*}
F''&=&(4\pi)^{-\frac{n}{2}}\int_{\Sigma}[-X_k\triangle X_k-X_kX_ph_{ijp}h_{ijk}+X_k<\frac{x}{2},e_i>\nabla_iX_k-\frac{1}{2}|X|^2]e^{-\frac{|x|^2}{4}}d\mu
\\&&+(4\pi)^{-\frac{n}{2}}\int_{\Sigma}[<X,y>-2h<X,H>-\frac{1}{2}|y^\perp|^2-h^2|H|^2]e^{-\frac{|x|^2}{4}}d\mu.
\end{eqnarray*}
Note that
$$(dd^*\theta)(e_k)=-\nabla_{e_k}\nabla_{e_j}\theta_j,$$
$$(d^*d\theta)(e_k)=-\nabla_{e_j}\nabla_{e_j}\theta_k+\nabla_{e_j}\nabla_{e_k}\theta_j,$$
then the Hodge Laplacian of $\theta$ is given by
\begin{eqnarray*}
(\triangle_H\theta)(e_k)&=&-\nabla_{e_k}\nabla_{e_j}\theta_j-\nabla_{e_j}\nabla_{e_j}\theta_k+\nabla_{e_j}\nabla_{e_k}\theta_j
\\&=&-\nabla_{e_j}\nabla_{e_j}\theta_k+R_{kl}\theta_l.
\end{eqnarray*}
In $\mathbb{C}^n$, the Ricci curvature of $\Sigma$ is given by
$$R_{kl}=<h_{kl},H>-<h_{kp},h_{lp}>.$$
Hence
\begin{eqnarray*}
(\triangle_H\theta)(e_k)&=&-\nabla_{e_j}\nabla_{e_j}\theta_k+(<h_{kl},H>-<h_{kp},h_{lp}>)\theta_l
\end{eqnarray*}
and
$$<\theta,\triangle_H\theta>=-X_k\triangle X_k-\frac{1}{2}<x,h_{kl}>X_kX_l-X_kX_lh_{kpq}h_{lpq}.$$
Then
\begin{eqnarray*}
F''&=&(4\pi)^{-\frac{n}{2}}\int_{\Sigma}[<\theta,\triangle_H\theta>+\frac{1}{2}<x,h_{kl}>X_kX_l+X_k<\frac{x}{2},e_i>\nabla_iX_k]
e^{-\frac{|x|^2}{4}}d\mu
\\&&+(4\pi)^{-\frac{n}{2}}\int_{\Sigma}[-\frac{1}{2}|\theta|^2+<X,y>-2h<X,H>-\frac{1}{2}|y^\perp|^2-h^2|H|^2]e^{-\frac{|x|^2}{4}}d\mu.
\end{eqnarray*}
We calculate the second term by
\begin{eqnarray*}
&&\int_{\Sigma}\frac{1}{2}<x,h_{kl}>X_kX_le^{-\frac{|x|^2}{4}}d\mu
\\&=&\int_{\Sigma}\frac{1}{2}[e_k(<x,e_l>X_l)-X_k-<x,e_l>\nabla_kX_l]X_ke^{-\frac{|x|^2}{4}}d\mu
\\&=&\int_{\Sigma}[\frac{1}{2}\theta(x^\top)d^*\theta+\frac{1}{4}|\theta(x^\top)|^2-\frac{1}{2}|\theta|^2-\frac{1}{2}X_k<x,e_i>\nabla_kX_i ]e^{-\frac{|x|^2}{4}}d\mu.
\end{eqnarray*}
Note that $X$ is Lagrangian, i.e. $\nabla_iX_k=\nabla_kX_i$. Therefore
\begin{eqnarray*}
F''&=&(4\pi)^{-\frac{n}{2}}\int_{\Sigma}[<\theta,\triangle_H\theta>+\frac{1}{2}\theta(x^\top)d^*\theta+\frac{1}{4}|\theta(x^\top)|^2-|\theta|^2]
e^{-\frac{|x|^2}{4}}d\mu
\\&&+(4\pi)^{-\frac{n}{2}}\int_{\Sigma}[-\theta(Jy^\perp)-h\theta(Jx^\perp)-\frac{1}{2}|y^\perp|^2-\frac{1}{4}h^2|x^\perp|^2]e^{-\frac{|x|^2}{4}}d\mu.
\end{eqnarray*}
By the decomposition (\ref{Hodgedecomposition}),
\begin{eqnarray*}
F''&=&(4\pi)^{-\frac{n}{2}}\int_{\Sigma}[<\theta,\triangle_H(du)>+\frac{1}{2}\theta(x^\top)d^*du+\frac{1}{4}|\theta(x^\top)|^2-|\theta|^2]
e^{-\frac{|x|^2}{4}}d\mu
\\&&+(4\pi)^{-\frac{n}{2}}\int_{\Sigma}[-\theta(Jy^\perp)-h\theta(Jx^\perp)-\frac{1}{2}|y^\perp|^2-\frac{1}{4}h^2|x^\perp|^2]e^{-\frac{|x|^2}{4}}d\mu.
\end{eqnarray*}
By integration by parts, we get
\begin{eqnarray*}
&&\int_{\Sigma}[<\theta,\triangle_H(du)>+\frac{1}{2}\theta(x^\top)d^*du+\frac{1}{4}|\theta(x^\top)|^2-|\theta|^2]
e^{-\frac{|x|^2}{4}}d\mu
\\&=&\int_{\Sigma}[|d^*du|^2+\theta(x^\top)d^*du+\frac{1}{4}|\theta(x^\top)|^2-|\theta|^2]e^{-\frac{|x|^2}{4}}d\mu
\\&=&\int_{\Sigma}[|d^*du+\frac{1}{2}\theta(x^\top)|^2-|\theta|^2]e^{-\frac{|x|^2}{4}}d\mu.
\end{eqnarray*}
Hence
\begin{eqnarray*}
F''&=&(4\pi)^{-\frac{n}{2}}\int_{\Sigma}[|d^*du+\frac{1}{2}du(x^\top)+\frac{1}{2}\theta_0(x^\top)|^2-|\theta|^2]e^{-\frac{|x|^2}{4}}d\mu
\\&&+(4\pi)^{-\frac{n}{2}}\int_{\Sigma}[-\theta(Jy^\perp)-h\theta(Jx^\perp)-\frac{1}{2}|y^\perp|^2-\frac{1}{4}h^2|x^\perp|^2]e^{-\frac{|x|^2}{4}}d\mu.
\end{eqnarray*}
\end{proof}

It is necessary to introduce the twisted Hodge Laplacian and the twisted Hodge decomposition theorem on compact Riemannian manifolds.
For a given smooth function $f$ on a compact manifold $(M,g)$, let $L_f^2$ be the space of those differential forms
which are square integrable with respect to the measure $e^{-f}d\mu_g$.
Let $d_f^*$ be the adjoint operator of $d$ in the Hilbert space $L_f^2$.
Then one has the twisted Hodge Laplacian
$$\triangle_{H,f}=d_f^*d+dd_f^*.$$
For the twisted Hodge Laplacian, Bueler \cite{Bueler} proved a Hodge decomposition theorem which states that
the space of $p$-forms has an orthogonal decomposition in $L^2_f$ by
$$\Omega^p=\mathcal{H}_f^p\oplus im d\oplus im d_f^*,$$
here $\mathcal{H}_f^p$ is the space of twisted harmonic $p$-forms, i.e. $p$-forms in the kernel of $\triangle_{H,f}$.
Hence $\mathcal{H}_f^p\cong H_{dR}(M)$ and for any closed $p$-form $\omega$ there exists a $(p-1)$-form $\alpha$ such that
$\omega+d\alpha$ is a twisted harmonic $p$-form. In particular for a $1$-form $\omega$, there exists a function $v$ such that
$d_f^*(\omega+dv)=0$.

We now come back to our situation.
Let $\Sigma$ be a closed Lagrangian self-shrinker that becomes extinct at $(0,1)$ and $f=\frac{|x|^2}{4}$.
Note that on one forms, $d_f^*=d^*+i_{\nabla f}$.
Hence for the $\theta=\theta_0+du$ in (\ref{Hodgedecomposition}),
$$d_f^*\theta=(d^*+i_{\frac{1}{2}x^\top})(\theta_0+du)=d^*du+\frac{1}{2}du(x^\top)+\frac{1}{2}\theta_0(x^\top).$$
Therefore we can rewrite the second variation formula ({\ref{secondvariationformulaLag}}) as follows.

\begin{corollary}\label{secondvariationLag2}
Let $\Sigma$ be a closed Lagrangian self-shrinker that becomes extinct at $(0,1)$ and $X$ a Lagrangian variation.
Let $\theta=-i_X\overline{\omega}$.
Then the second variation is given by
\begin{equation}\label{secondvariationformulaLag2}
F''=(4\pi)^{-\frac{n}{2}}\int_{\Sigma}[|d_f^*\theta|^2-|\theta|^2-\theta(Jy^\perp)-h\theta(Jx^\perp)-\frac{1}{2}|y^\perp|^2-\frac{1}{4}h^2|x^\perp|^2]
e^{-\frac{|x|^2}{4}}d\mu.
\end{equation}
\end{corollary}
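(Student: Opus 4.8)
The plan is to recognize that the first integrand in Proposition \ref{secondvariationofLagrangian} is already, up to notation, the squared norm of the twisted codifferential $d_f^*\theta$; once this identification is made the corollary is immediate. The one nontrivial input is the formula for $d_f^*$ acting on $1$-forms, which I would establish first.

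First I would record that on $1$-forms the twisted codifferential $d_f^*$—the adjoint of $d$ in the weighted space $L_f^2$—differs from the ordinary codifferential by an interior multiplication, namely $d_f^* = d^* + i_{\nabla f}$. This follows from integrating by parts against the measure $e^{-f}d\mu$: the weight produces the extra zeroth-order term $i_{\nabla f}$, exactly paralleling the scalar identity $\triangle_f = \triangle - g(\nabla f,\nabla\cdot)$ already used in Section \ref{section2}. Since $f=\frac{|x|^2}{4}$ on $\Sigma$, the tangential gradient is $\nabla f = \frac{1}{2}x^\top$, so $i_{\nabla f}\eta = \frac{1}{2}\eta(x^\top)$ for any $1$-form $\eta$.

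Next I would apply this to the Hodge decomposition $\theta = \theta_0 + du$ from (\ref{Hodgedecomposition}). Because $\theta_0$ is the ordinary harmonic representative, $d^*\theta_0 = 0$, whence $d^*\theta = d^*du$, while $i_{\nabla f}\theta = \frac{1}{2}\theta_0(x^\top) + \frac{1}{2}du(x^\top)$. Combining these gives
$$d_f^*\theta = d^*du + \frac{1}{2}du(x^\top) + \frac{1}{2}\theta_0(x^\top),$$
which is precisely the expression inside the first modulus sign in the formula of Proposition \ref{secondvariationofLagrangian}. Replacing that term by $|d_f^*\theta|^2$ converts the Proposition's formula directly into (\ref{secondvariationformulaLag2}), completing the argument.

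There is no serious obstacle here: the entire content is the codifferential identity $d_f^* = d^* + i_{\nabla f}$ together with the vanishing $d^*\theta_0 = 0$, both routine. The only points demanding genuine care are the sign and normalization conventions—confirming that the weight $e^{-f}$ yields $+i_{\nabla f}$ rather than $-i_{\nabla f}$, and that $\nabla f$ is the tangential (not ambient) gradient, equal to $\frac{1}{2}x^\top$—so that the rewritten square matches the Proposition's integrand exactly rather than off by a constant factor.
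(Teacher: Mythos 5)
Your proposal is correct and coincides with the paper's own argument: the paper likewise asserts $d_f^*=d^*+i_{\nabla f}$ on $1$-forms with $\nabla f=\frac{1}{2}x^\top$, applies it to the decomposition $\theta=\theta_0+du$ (using $d^*\theta_0=0$) to identify $d_f^*\theta=d^*du+\frac{1}{2}du(x^\top)+\frac{1}{2}\theta_0(x^\top)$ with the expression inside the first square in (\ref{secondvariationformulaLag}), and then reads off (\ref{secondvariationformulaLag2}). Your extra care about the sign of $i_{\nabla f}$ and the tangential gradient is exactly the right check, and nothing further is needed.
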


\begin{lemma}\label{uzero}
For any harmonic $1$-form $\theta_0$, there exists a function $u_0$ such that
\begin{equation}\label{uzerofunction}
d^*du_0+\frac{1}{2}du_0(x^\top)+\frac{1}{2}\theta_0(x^\top)=0.
\end{equation}
\end{lemma}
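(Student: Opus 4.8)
The plan is to recognize that \eqref{uzerofunction} is really a scalar Poisson-type equation for the twisted Laplacian, and then to solve it by the Fredholm alternative. The starting point is the identity $d_f^*=d^*+i_{\nabla f}$ on $1$-forms, together with $\nabla f=\frac{1}{2}x^\top$. Applying this to $du_0$ gives $d^*du_0+\frac{1}{2}du_0(x^\top)=d_f^*du_0=-\triangle_f u_0$, where I use that the twisted Laplacian on functions is $-d_f^*d$. Hence (\ref{uzerofunction}) is equivalent to the single scalar equation
$$\triangle_f u_0=\tfrac{1}{2}\theta_0(x^\top)$$
on the closed manifold $\Sigma$.

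Next I would invoke elliptic theory. The operator $\triangle_f=\triangle-\frac{1}{2}<x^\top,\nabla\cdot>$ has the same principal symbol as $\triangle$, so it is elliptic and Fredholm on the closed manifold $\Sigma$; moreover by (\ref{integrationbyparts}) it is self-adjoint with respect to the weighted measure $e^{-f}d\mu$ and satisfies $\int_\Sigma u\triangle_f u\, e^{-f}d\mu=-\int_\Sigma|\nabla u|^2 e^{-f}d\mu$, so its kernel on functions consists only of the constants. By the Fredholm alternative, the equation $\triangle_f u_0=\frac{1}{2}\theta_0(x^\top)$ admits a solution $u_0$ if and only if the right-hand side is $L_f^2$-orthogonal to the constants, that is,
$$\int_\Sigma\theta_0(x^\top)\,e^{-\frac{|x|^2}{4}}d\mu=0.$$

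It therefore remains to verify this compatibility condition, which is where the hypothesis that $\theta_0$ is harmonic enters. Since $\theta_0$ is harmonic in the ordinary sense we have $d^*\theta_0=0$, and hence $d_f^*\theta_0=i_{\nabla f}\theta_0=\theta_0(\nabla f)=\frac{1}{2}\theta_0(x^\top)$. Pairing against the constant function $1$ in $L_f^2$ and using that $d_f^*$ is the adjoint of $d$ with respect to $e^{-f}d\mu$ gives
$$\int_\Sigma\tfrac{1}{2}\theta_0(x^\top)\,e^{-f}d\mu=\int_\Sigma(d_f^*\theta_0)\cdot 1\cdot e^{-f}d\mu=\int_\Sigma<\theta_0,d1>e^{-f}d\mu=0,$$
since $d1=0$. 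This establishes the solvability condition and produces the desired $u_0$.

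The computation above is essentially all there is; the only points requiring care are the reformulation in the first paragraph (keeping track of signs so that $d^*du_0+\frac{1}{2}du_0(x^\top)$ really equals $-\triangle_f u_0$) and checking that the harmonicity of $\theta_0$ is exactly what makes the Fredholm compatibility condition hold. I do not expect a genuine analytic obstacle: once the problem is phrased as $\triangle_f u_0=\frac{1}{2}\theta_0(x^\top)$ on a closed manifold, standard self-adjoint elliptic theory for the drift Laplacian applies verbatim.
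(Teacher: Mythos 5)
Your proof is correct, but it takes a genuinely different route from the paper's. The paper disposes of the lemma in one line by citing Bueler's twisted Hodge decomposition $\Omega^1=\mathcal{H}_f^1\oplus \mathrm{im}\,d\oplus \mathrm{im}\,d_f^*$: since $\theta_0$ is closed, its cohomology class contains a twisted harmonic representative $\theta_0+du_0$, and expanding $0=d_f^*(\theta_0+du_0)=(d^*+i_{\frac{1}{2}x^\top})(\theta_0+du_0)$ together with $d^*\theta_0=0$ yields exactly \eqref{uzerofunction}. You instead rewrite \eqref{uzerofunction} as the scalar Poisson equation $\triangle_f u_0=\frac{1}{2}\theta_0(x^\top)$ for the drift Laplacian (your sign bookkeeping, $d_f^*du_0=-\triangle_f u_0$, matches the paper's conventions) and solve it by the Fredholm alternative on the closed manifold, checking the compatibility condition $\int_\Sigma \theta_0(x^\top)e^{-f}d\mu=0$ via $d_f^*\theta_0=\frac{1}{2}\theta_0(x^\top)$ and the adjointness of $d_f^*$ and $d$ in $L_f^2$. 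Both arguments hinge on the same identity $d_f^*=d^*+i_{\frac{1}{2}x^\top}$, so they are close in spirit, but what each buys is different. The paper's route is shorter and, more importantly, packages $\theta_0+du_0$ conceptually as \emph{the twisted harmonic representative of the class} $[\theta_0]$, which is precisely the form in which the lemma is invoked later (in the proof of Theorem \ref{Main2} one takes the twisted harmonic $1$-form in $[\theta_0]$, and the same viewpoint underlies Remark \ref{Remark} on the Maslov class). Your route is more elementary and self-contained: it needs only scalar elliptic theory (Fredholmness and weighted self-adjointness of $\triangle_f$, with kernel the constants) rather than the weighted Hodge theorem for $1$-forms, and it isolates exactly where the hypothesis on $\theta_0$ enters, namely as the solvability condition. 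In fact you only ever use $d^*\theta_0=0$, so your argument proves the statement for any coclosed $1$-form, whereas the paper's argument needs $d\theta_0=0$ in order to apply the decomposition within a cohomology class.
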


\begin{proof}
Applying Bueler's twisted Hodge decomposition theorem to the harmonic $1$-form $\theta_0$, we see that there exists a function $u_0$ such that
$$0=d_f^*(\theta_0+du_0)=(d^*+i_{\frac{1}{2}x^\top})(\theta_0+du_0)=d^*du_0+\frac{1}{2}du_0(x^\top)+\frac{1}{2}\theta_0(x^\top).$$
\end{proof}

\begin{proposition}
Let $\Sigma$ be a closed Lagrangian self-shrinker that becomes extinct at $(0,1)$.
Let $\theta_0$ be any harmonic $1$-form and $u_0$ a solution of (\ref{uzerofunction}).
Then for the twisted harmonic $1$-form $\theta=\theta_0+du_0$, we have
\begin{equation}\label{secondvariationoftwistedharmonic}
F''=(4\pi)^{-\frac{n}{2}}\int_{\Sigma}[-|\theta+hH|^2-\frac{1}{2}|y^\perp|^2]e^{-\frac{|x|^2}{4}}d\mu,
\end{equation}
here $H$ is the mean curvature form $-i_H\overline{\omega}=-\frac{1}{2}<x,\nu_k>e^k$.
\end{proposition}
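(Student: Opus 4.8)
The plan is to specialize the second variation formula of Corollary \ref{secondvariationLag2} to the twisted harmonic representative $\theta=\theta_0+du_0$ and then to dispose of the three cross terms one at a time. First I would use the choice of $u_0$ from Lemma \ref{uzero}, together with the identity $d_f^*=d^*+i_{\frac{1}{2}x^\top}$ on $1$-forms, to conclude that $\theta=\theta_0+du_0$ satisfies $d_f^*\theta=d^*du_0+\frac{1}{2}du_0(x^\top)+\frac{1}{2}\theta_0(x^\top)=0$. Hence the leading term $|d_f^*\theta|^2$ in (\ref{secondvariationformulaLag2}) vanishes identically, leaving
\begin{equation*}
F''=(4\pi)^{-\frac{n}{2}}\int_{\Sigma}[-|\theta|^2-\theta(Jy^\perp)-h\theta(Jx^\perp)-\frac{1}{2}|y^\perp|^2-\frac{1}{4}h^2|x^\perp|^2]e^{-\frac{|x|^2}{4}}d\mu.
\end{equation*}

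Next I would rewrite the terms containing $x^\perp$ in terms of the mean curvature form $H=-\frac{1}{2}<x,\nu_k>e^k$. Using $\nu_k=Je_k$ and $J\nu_k=-e_k$ gives $Jx^\perp=-<x,\nu_k>e_k$, so $\theta(Jx^\perp)=-<x,\nu_k>\theta_k=2<\theta,H>$; moreover $|H|^2=\frac{1}{4}\sum_k<x,\nu_k>^2=\frac{1}{4}|x^\perp|^2$. Completing the square then yields
$$-|\theta|^2-h\theta(Jx^\perp)-\frac{1}{4}h^2|x^\perp|^2=-|\theta|^2-2h<\theta,H>-h^2|H|^2=-|\theta+hH|^2,$$
which is precisely the first term on the right of (\ref{secondvariationoftwistedharmonic}). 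This portion is routine bookkeeping once the correspondence $X_k\leftrightarrow\theta_k$ and the relation $H=-\frac{1}{2}x^\perp$ are in place.

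The real content, and what I expect to be the main obstacle, is showing that the remaining cross term $\theta(Jy^\perp)$ integrates to zero against the Gaussian weight. The idea is to recognize it as the pairing of $\theta$ with an exact form. Writing $Jy^\perp=-<y,\nu_k>e_k$ and using $<y,\nu_k>=<y,Je_k>=-<Jy,e_k>$, one finds $\theta(Jy^\perp)=<Jy,e_k>\theta_k=<\theta,d\phi>$, where $\phi=<x,Jy>$ is the restriction to $\Sigma$ of a linear function, so that $\nabla\phi=(Jy)^\top$. Since $d_f^*$ is the adjoint of $d$ with respect to the measure $e^{-\frac{|x|^2}{4}}d\mu$, integration by parts gives
$$\int_{\Sigma}\theta(Jy^\perp)e^{-\frac{|x|^2}{4}}d\mu=\int_{\Sigma}<\theta,d\phi>e^{-\frac{|x|^2}{4}}d\mu=\int_{\Sigma}\phi\,(d_f^*\theta)\,e^{-\frac{|x|^2}{4}}d\mu=0,$$
the last equality again using $d_f^*\theta=0$. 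Thus the translational cross term disappears, and collecting the surviving contributions produces exactly (\ref{secondvariationoftwistedharmonic}).

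The delicate point worth emphasizing is that this vanishing is \emph{not} pointwise: it relies essentially on the twisted co-closedness of $\theta$. It is precisely the twisted harmonicity forced by the choice of $u_0$ that simultaneously kills the $|d_f^*\theta|^2$ term and the $y$-cross term, which is why one must work with the twisted harmonic representative $\theta_0+du_0$ rather than with an arbitrary closed $1$-form in its class.
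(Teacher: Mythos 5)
Your proposal is correct and follows essentially the same route as the paper: both kill the $|d_f^*\theta|^2$ term via Lemma \ref{uzero}, dispose of the translational cross term by writing $\theta(Jy^\perp)=\langle\theta,d\langle Jy,x\rangle\rangle$ and integrating by parts against the Gaussian weight using $d_f^*\theta=0$, and complete the square in $h$ using $\theta(Jx^\perp)=2\langle\theta,H\rangle$ and $|H|^2=\frac{1}{4}|x^\perp|^2$. The only difference is the order of the last two steps, which is immaterial.
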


\begin{proof}
By (\ref{uzerofunction}), for $\theta=\theta_0+du_0$ we have
\begin{eqnarray*}
F''&=&(4\pi)^{-\frac{n}{2}}\int_{\Sigma}
[-|\theta|^2-\theta(Jy^\perp)-h\theta(Jx^\perp)-\frac{1}{2}|y^\perp|^2-\frac{1}{4}h^2|x^\perp|^2]e^{-\frac{|x|^2}{4}}d\mu.
\end{eqnarray*}
We now prove that the term $\int_{\Sigma}\theta(Jy^\perp)e^{-\frac{|x|^2}{4}}d\mu$ vanishes.
In fact
\begin{eqnarray*}
\int_{\Sigma}\theta(Jy^\perp)e^{-\frac{|x|^2}{4}}d\mu&=&\int_{\Sigma}\theta(e_k)<Jy,e_k>e^{-\frac{|x|^2}{4}}d\mu
\\&=&\int_{\Sigma}\theta(e_k)e_k<Jy,x>e^{-\frac{|x|^2}{4}}d\mu
\\&=&\int_{\Sigma}(d_f^*\theta)<Jy,x>e^{-\frac{|x|^2}{4}}d\mu
\\&=&0.
\end{eqnarray*}
Then for the twisted harmonic $1$-form $\theta=\theta_0+du_0$, we have
\begin{eqnarray*}F''(y,h,\theta)
&=&(4\pi)^{-\frac{n}{2}}\int_{\Sigma}[-|\theta|^2-h\theta(Jx^\perp)-\frac{1}{2}|y^\perp|^2-\frac{1}{4}h^2|x^\perp|^2]e^{-\frac{|x|^2}{4}}d\mu
\\&=&(4\pi)^{-\frac{n}{2}}\int_{\Sigma}[-\sum_k(\theta_k-\frac{1}{2}h<x,\nu_k>)^2-\frac{1}{2}|y^\perp|^2]
\\&=&(4\pi)^{-\frac{n}{2}}\int_{\Sigma}[-|\theta+hH|^2-\frac{1}{2}|y^\perp|^2].
\end{eqnarray*}
\end{proof}

\begin{remark}\label{Remark}
The mean curvature form $H=H_ke^k$ is twisted harmonic. In fact
$$d_f^*H=(d^*+i_{\frac{1}{2}x^\top})(-\frac{1}{2}<x,\nu_k>e^k)=0.$$
By the fact that a closed self-shrinker has non-vanishing mean curvature and $H$ is twisted harmonic,
one sees that the Maslov class $[H]\neq 0$.
Hence any closed Lagrangian self-shrinker must have $b_1\geq 1$.
This was first observed by Smoczyk, see \cite{CastroLerma2}.
\end{remark}

\begin{theorem}\label{Main2}
Any closed Lagrangian self-shrinker with $b_1\geq 2$ is Lagrangian F-unstable.
\end{theorem}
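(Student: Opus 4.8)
The plan is to exploit the second variation formula (\ref{secondvariationoftwistedharmonic}) for a suitably chosen twisted harmonic $1$-form. The decisive feature of that formula is that its right-hand side is manifestly nonpositive; consequently, if I can exhibit a Lagrangian variation for which it is \emph{strictly} negative for every choice of $y$ and $h$, then the defining inequality for Lagrangian F-stability fails and instability follows immediately.

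First I would invoke the twisted Hodge decomposition theorem to identify the space $\mathcal{H}_f^1$ of twisted harmonic $1$-forms with $H^1_{dR}(\Sigma)$, so that $\dim \mathcal{H}_f^1 = b_1 \geq 2$. By Remark \ref{Remark} the mean curvature form $H$ is a nonzero element of $\mathcal{H}_f^1$ with nontrivial Maslov class. Since $\dim H^1_{dR}(\Sigma) \geq 2$, I may select a de Rham class that is not a scalar multiple of $[H]$ and let $\theta$ be its unique twisted harmonic representative. Because the natural map $\mathcal{H}_f^1 \to H^1_{dR}(\Sigma)$ is an isomorphism, $\theta$ is linearly independent from $H$ as a form, that is, $\theta \neq cH$ for every constant $c$.

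Next I would take $X$ to be the Lagrangian variation determined by $-i_X\overline{\omega} = \theta$, which is admissible because $\theta$, being twisted harmonic, is closed. Writing $\theta = \theta_0 + du_0$ with $\theta_0$ the ordinary harmonic representative, the twisted harmonicity condition $d_f^*\theta = 0$ is precisely (\ref{uzerofunction}), so the hypotheses of the Proposition preceding this theorem are met, and (\ref{secondvariationoftwistedharmonic}) gives, for all $y$ and $h$,
\begin{equation*}
F'' = (4\pi)^{-\frac{n}{2}}\int_\Sigma\left[-|\theta + hH|^2 - \frac{1}{2}|y^\perp|^2\right]e^{-\frac{|x|^2}{4}}d\mu \leq 0.
\end{equation*}

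Finally I would rule out equality. Since $\theta$ is not a scalar multiple of $H$, the form $\theta + hH$ is not identically zero for any $h$, whence $\int_\Sigma |\theta + hH|^2 e^{-|x|^2/4}d\mu > 0$; therefore $F'' < 0$ for all $y$ and $h$, and $\Sigma$ is Lagrangian F-unstable. The only subtle point is the choice of $\theta$: I must upgrade a cohomology class independent of the Maslov class to an actual twisted harmonic form linearly independent from $H$, and it is exactly the isomorphism $\mathcal{H}_f^1 \cong H^1_{dR}(\Sigma)$ supplied by the twisted Hodge theory that legitimizes this passage. I expect this step to be the main (indeed essentially the only) obstacle, the remainder being a direct reading of (\ref{secondvariationoftwistedharmonic}).
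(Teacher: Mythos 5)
Your proof is correct and follows essentially the same route as the paper: take the twisted harmonic representative of a cohomology class independent of the Maslov class and read off strict negativity of $F''$ for all $y,h$ from (\ref{secondvariationoftwistedharmonic}). In fact you are slightly more careful than the paper's own two-line proof, which only asks that $[\theta_0]\neq [H]$; your requirement that $[\theta_0]$ not be a \emph{scalar multiple} of $[H]$ (justified via the isomorphism $\mathcal{H}_f^1\cong H^1_{dR}(\Sigma)$) is exactly what is needed to rule out $\theta+hH\equiv 0$ for some $h$.
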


\begin{proof}
If $b_1\geq 2$, we can take a nontrivial class $[\theta_0]$ which is different from the class $[H]$.
Let $\theta=\theta_0+du_0$ be the twisted harmonic $1$-form in $[\theta_0]$.
Then it follows from (\ref{secondvariationoftwistedharmonic}) that $F''(y,h,\theta)<0$ for all $y, h$.
\end{proof}

By a theorem of Whitney any closed Lagrangian embedding $\Sigma$ in $\mathbb{C}^n$
must have vanishing Euler characteristic $\chi(\Sigma)$ (cf. \cite{Smoczyk4}).
If $n=2$, the only closed Lagrangian embedding in $\mathbb{C}^2$ is torus.

\begin{corollary}
In $\mathbb{C}^2$, there are no embedded closed Lagrangian F-stable Lagrangian self-shrinkers.
\end{corollary}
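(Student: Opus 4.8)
The plan is to deduce this corollary directly from Theorem \ref{Main2} together with the topological constraint on embedded closed Lagrangian surfaces in $\mathbb{C}^2$ recalled just above the statement. The strategy is purely topological: identify the homeomorphism type of any admissible $\Sigma$, read off its first Betti number, and check that it lands in the regime $b_1\geq 2$ to which the instability theorem applies. No further analysis of the second variation is needed, since all the hard work has already been done in establishing that a nontrivial twisted harmonic $1$-form outside the Maslov class forces $F''<0$.

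First I would invoke Whitney's theorem (as cited) to conclude that any embedded closed Lagrangian submanifold $\Sigma\subset\mathbb{C}^2$ has vanishing Euler characteristic $\chi(\Sigma)=0$. Under our standing hypotheses $\Sigma$ is a closed orientable surface, so by the classification of closed orientable surfaces the condition $\chi(\Sigma)=0$ forces $\Sigma$ to be the $2$-torus $T^2$. Next I would simply record that $b_1(T^2)=2$, so in particular $b_1(\Sigma)\geq 2$. Finally, applying Theorem \ref{Main2} to this $\Sigma$ shows it is Lagrangian F-unstable; since an F-stable self-shrinker cannot simultaneously be F-unstable, there is no embedded closed Lagrangian F-stable Lagrangian self-shrinker in $\mathbb{C}^2$. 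This is cleanest phrased as a direct implication rather than a contradiction: \emph{every} embedded closed Lagrangian self-shrinker in $\mathbb{C}^2$ is a torus with $b_1=2$, hence unstable.

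There is essentially no analytic obstacle here; the entire content is the topological reduction, and the genuinely nontrivial input is the cited Whitney-type vanishing of $\chi$. The only points deserving care are bookkeeping ones: that $\Sigma$ is genuinely a $2$-dimensional surface in $\mathbb{C}^2=\mathbb{R}^4$ (so that the surface classification is available and consistent with the paper's blanket assumption of dimension $\geq 2$), that orientability is in force so we may use the orientable classification, and that the torus indeed satisfies the \emph{strict} hypothesis $b_1\geq 2$ of Theorem \ref{Main2} rather than merely $b_1\geq 1$. Once these are verified the conclusion is immediate.
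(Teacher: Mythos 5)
Your proposal is correct and follows exactly the paper's route: Whitney's theorem gives $\chi(\Sigma)=0$, so the (orientable, closed) embedded Lagrangian surface in $\mathbb{C}^2$ is a torus with $b_1=2$, and Theorem \ref{Main2} then gives Lagrangian F-instability. The paper treats this as immediate after the same remark about Whitney's theorem, so there is nothing to add.
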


\section{Hamiltonian F-stability of closed Lagrangian self-shrinkers}

In this section we first give the second variation formula of the F-functional under Hamiltonian deformations.
We then show that for a closed Lagrangian self-shrinker $\Sigma$ with $b_1=1$,
the Lagrangian F-stability of $\Sigma$ is equivalent to the Hamiltonian F-stability of $\Sigma$.
Finally, we characterize the Hamiltonian F-stability of a closed Lagrangian self-shrinker
by its spectral property of the twisted Laplacian $\triangle_f$.

\begin{proposition}\label{secondvariationofHamiltonian}
Let $\Sigma$ be a closed Lagrangian self-shrinker that becomes extinct at $(0,1)$
and $X$ a Hamiltonian variation with $-i_X\overline{\omega}=du$.
Then the second variation is given by
\begin{eqnarray}\label{secondvariationformulaHam}
F''&=&(4\pi)^{-\frac{n}{2}}\int_{\Sigma}[|d_f^*du|^2-|du|^2-du(Jy^\perp)-\frac{1}{2}|y^\perp|^2-\frac{1}{4}h^2|x^\perp|^2]e^{-\frac{|x|^2}{4}}d\mu.
\end{eqnarray}
In particular $\Sigma$ is Hamiltonian F-stable if and only if for any Hamiltonian variation $J\nabla u$, there exists a vector $y$ such that
$$\int_{\Sigma}[|d_f^*du|^2-|du|^2-du(Jy^\perp)-\frac{1}{2}|y^\perp|^2]e^{-\frac{|x|^2}{4}}d\mu\geq 0.$$
\end{proposition}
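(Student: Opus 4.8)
The plan is to specialize Corollary \ref{secondvariationLag2} to a Hamiltonian variation and then to dispose of the single term that separates the two formulas. First I would note that for $X=J\nabla u$ the associated $1$-form is $\theta=-i_X\overline{\omega}=du$, which is exact, so the harmonic part in the Hodge decomposition (\ref{Hodgedecomposition}) vanishes, i.e.\ $\theta_0=0$. Substituting $\theta=du$ directly into (\ref{secondvariationformulaLag2}) reproduces every term appearing in (\ref{secondvariationformulaHam}) except the mixed term $-h\,\theta(Jx^\perp)=-h\,du(Jx^\perp)$. Thus the entire content of the first displayed formula reduces to showing that $\int_{\Sigma}du(Jx^\perp)e^{-\frac{|x|^2}{4}}d\mu=0$.

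The crux is this vanishing, and I would establish it by relating $du(Jx^\perp)$ to the mean curvature form. Since $\nu_k=Je_k$ gives $J\nu_k=-e_k$, the normal vector $x^\perp=<x,\nu_k>\nu_k$ satisfies $Jx^\perp=-<x,\nu_k>e_k$, so that $du(Jx^\perp)=-<x,\nu_k>\nabla_ku$. Recalling from Remark \ref{Remark} that $H=-\frac{1}{2}<x,\nu_k>e^k$, this is exactly $2H_k\nabla_ku=2<H,du>$ pointwise, the inner product of the $1$-forms $H$ and $du$. Then by the adjointness of $d_f^*$ and $d$ in $L_f^2$ one has
$$\int_{\Sigma}<H,du>e^{-\frac{|x|^2}{4}}d\mu=\int_{\Sigma}(d_f^*H)\,u\,e^{-\frac{|x|^2}{4}}d\mu=0,$$
because $d_f^*H=0$, again by Remark \ref{Remark}. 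This kills the cross term and yields (\ref{secondvariationformulaHam}).

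For the characterization I would simply optimize over $h$. In (\ref{secondvariationformulaHam}) the parameter $h$ enters only through the manifestly non-positive term $-\frac{1}{4}h^2|x^\perp|^2$, so for fixed $u$ and $y$ the right-hand side is maximized at $h=0$. Hence the defining condition of Hamiltonian F-stability (existence of $y$ and $h$ with $F''\geq0$ for every Hamiltonian variation) is equivalent to the existence of $y$ alone making the $h=0$ expression non-negative, which is precisely the claimed inequality.

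The main obstacle is entirely concentrated in the sign and tensor bookkeeping of the identity $du(Jx^\perp)=2<H,du>$, together with the correct invocation of the twisted-harmonicity $d_f^*H=0$; once these are in hand the cross term vanishes immediately, and the reduction $h=0$ in the second part is routine. I would therefore devote the most care to the $J$-action and the normalization of $H$, since an error there is the only place the argument could go wrong.
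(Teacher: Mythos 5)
Your proof is correct and follows essentially the same route as the paper: specialize the Lagrangian second variation formula (\ref{secondvariationformulaLag2}) to the exact form $\theta=du$ and show that the cross term $\int_\Sigma h\,du(Jx^\perp)e^{-\frac{|x|^2}{4}}d\mu$ vanishes, after which the $h=0$ reduction for the stability characterization is immediate. The only cosmetic difference is that you derive the vanishing from the identity $du(Jx^\perp)=2\langle H,du\rangle$ together with the twisted harmonicity $d_f^*H=0$ of Remark \ref{Remark}, whereas the paper carries out the equivalent integration by parts directly and uses the self-shrinker equation pointwise.
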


\begin{proof}
It follows from (\ref{secondvariationformulaLag2}) and
\begin{eqnarray*}
\int_{\Sigma}du(Jx^\perp)e^{-\frac{|x|^2}{4}}d\mu
&=&\int_{\Sigma}u_k<Jx,e_k>e^{-\frac{|x|^2}{4}}d\mu
\\&=&\int_{\Sigma}-u_k<x,\nu_k>e^{-\frac{|x|^2}{4}}d\mu
\\&=&\int_{\Sigma}u[<x,-H_pe_p>-\frac{1}{2}<x,\nu_k><x,e_k>]e^{-\frac{|x|^2}{4}}d\mu
\\&=&0.
\end{eqnarray*}
\end{proof}

We now consider the case that $b_1(\Sigma)=1$.
Note that the mean curvature form $H$ represents the nontrivial Maslov class.
Hence any closed $1$-form $\theta$ can be written as
$$\theta=-2aH+du.$$

\begin{theorem}
For a closed Lagrangian self-shrinker $\Sigma$ with $b_1=1$, the Lagrangian F-stability of $\Sigma$ is equivalent to
the Hamiltonian F-stability of $\Sigma$.
\end{theorem}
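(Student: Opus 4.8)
The plan is to prove two implications, only one of which requires work. Since every Hamiltonian variation is in particular a Lagrangian variation, the defining inequality for Lagrangian F-stability (which is quantified over \emph{all} Lagrangian variations) immediately specializes to the one defining Hamiltonian F-stability. Hence Lagrangian F-stability trivially implies Hamiltonian F-stability, and the substance of the theorem is the converse.

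For the converse I would use that $b_1=1$ together with $[H]\neq 0$ (Remark \ref{Remark}) forces $H^1_{dR}(\Sigma)$ to be spanned by the Maslov class. Thus every closed $1$-form $\theta=-i_X\overline{\omega}$ attached to a Lagrangian variation $X$ can be written as
$$\theta=-2aH+du$$
for a unique constant $a$ and a function $u$ (unique up to a constant), where $du$ corresponds to a Hamiltonian variation. The guiding idea is that the $H$-component of $\theta$ lies in the dilation direction already governed by the parameter $h$, so the Lagrangian second variation (\ref{secondvariationformulaLag2}) for $\theta$ should reduce to the Hamiltonian second variation (\ref{secondvariationformulaHam}) for $du$ after a shift in $h$.

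Concretely, I would substitute $\theta=-2aH+du$ into (\ref{secondvariationformulaLag2}) and simplify using that $H$ is twisted harmonic, i.e. $d_f^*H=0$ (Remark \ref{Remark}). The key intermediate facts are $d_f^*\theta=d_f^*du$; the vanishing integrals $\int_\Sigma\langle H,du\rangle e^{-\frac{|x|^2}{4}}d\mu=0$ and $\int_\Sigma H(Jy^\perp)e^{-\frac{|x|^2}{4}}d\mu=0$ (both obtained by integration by parts against $d_f^*H=0$, exactly as in the proof of the preceding Proposition); and the pointwise identities $|x^\perp|^2=4|H|^2$ and $H(Jx^\perp)=\tfrac12|x^\perp|^2=2|H|^2$. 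After these cancellations the whole $a$- and $h$-dependence outside the $du$-terms collapses to a single perfect square, yielding
$$F''=(4\pi)^{-\frac{n}{2}}\int_{\Sigma}\Big[|d_f^*du|^2-|du|^2-du(Jy^\perp)-\tfrac12|y^\perp|^2\Big]e^{-\frac{|x|^2}{4}}d\mu-(4\pi)^{-\frac{n}{2}}(2a-h)^2\int_\Sigma|H|^2e^{-\frac{|x|^2}{4}}d\mu.$$
Comparing with (\ref{secondvariationformulaHam}), where the dilation enters as $-\tfrac14h^2|x^\perp|^2=-h^2|H|^2$, this is exactly the Hamiltonian second variation for $du$ with dilation parameter $h'=2a-h$ and the same $y$.

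To conclude: assume Hamiltonian F-stability and take an arbitrary Lagrangian variation $\theta=-2aH+du$. Apply the Hamiltonian hypothesis to the Hamiltonian variation $du$ to obtain $y$ and $h'$ making the Hamiltonian second variation nonnegative, then set $h=2a-h'$; the displayed identity gives $F''(y,h,\theta)\geq 0$, proving Lagrangian F-stability. I expect the main obstacle to be the bookkeeping of the cross terms—specifically verifying that the mixed $aH$–$du$ contributions to $|\theta|^2$, $\theta(Jy^\perp)$ and $\theta(Jx^\perp)$ either vanish or combine cleanly—so that the residual quadratic in $(a,h)$ is genuinely the perfect square $(2a-h)^2$ rather than an indefinite form. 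Pinning down those coefficients is precisely what makes the reduction to a mere shift of $h$ succeed.
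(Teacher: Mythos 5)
Your proposal is correct and follows essentially the same route as the paper: the same decomposition $\theta=-2aH+du$ coming from $b_1=1$ and $[H]\neq 0$, the same use of $d_f^*H=0$ so that $d_f^*\theta=d_f^*du$, and the same collapse of the residual $(a,h)$-dependence into the perfect square $(2a-h)^2\int_\Sigma|H|^2e^{-\frac{|x|^2}{4}}d\mu$, which the paper writes equivalently as $(a-\frac{1}{2}h)^2|x^\perp|^2$, followed by comparison with the Hamiltonian formula via the shift $h'=2a-h$. The only cosmetic difference is that you derive the vanishing of the cross terms by integrating by parts against $d_f^*H=0$, whereas the paper obtains them from Lemma \ref{Lemma} and the computation in the proof of Proposition \ref{secondvariationofHamiltonian}.
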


\begin{proof}
Let $X$ be any Lagrangian variation and
$$\theta=-i_X\overline{\omega}=-2aH+du=(a<x,\nu_k>+u_k)e^k.$$
By the formula (\ref{secondvariationformulaLag2}) and the fact that the mean curvature form $H$ is twisted harmonic,
\begin{eqnarray*}
F''&=&(4\pi)^{-\frac{n}{2}}\int_{\Sigma}[|d^*_fdu|^2-|\theta|^2-\theta(Jy^\perp)-h\theta(Jx^\perp)-\frac{1}{2}|y^\perp|^2-\frac{1}{4}h^2|x^\perp|^2]
e^{-\frac{|x|^2}{4}}d\mu
\\&=&(4\pi)^{-\frac{n}{2}}\int_{\Sigma}[|d^*_fdu|^2-(|du|^2+a^2|x^\perp|^2-2adu(Jx^\perp))]e^{-\frac{|x|^2}{4}}d\mu
\\&&+(4\pi)^{-\frac{n}{2}}\int_{\Sigma}-[a<x,\nu_k><Jy^\perp,e_k>+du(Jy^\perp)]e^{-\frac{|x|^2}{4}}d\mu
\\&&+(4\pi)^{-\frac{n}{2}}\int_{\Sigma}-h[a<x,\nu_k><Jx^\perp,e_k>+du(Jx^\perp)]e^{-\frac{|x|^2}{4}}d\mu
\\&&+(4\pi)^{-\frac{n}{2}}\int_{\Sigma}[-\frac{1}{2}|y^\perp|^2-\frac{1}{4}h^2|x^\perp|^2]e^{-\frac{|x|^2}{4}}d\mu.
\end{eqnarray*}
In the proof of Proposition \ref{secondvariationofHamiltonian}, we have seen that
$$\int_{\Sigma}du(Jx^\perp)e^{-\frac{|x|^2}{4}}d\mu=0.$$
Then
\begin{eqnarray*}
F''&=&(4\pi)^{-\frac{n}{2}}\int_{\Sigma}[|d^*_fdu|^2-(|du|^2+a^2|x^\perp|^2)]e^{-\frac{|x|^2}{4}}d\mu
\\&&+(4\pi)^{-\frac{n}{2}}\int_{\Sigma}-[a<x,\nu_k><Jy^\perp,e_k>+du(Jy^\perp)]e^{-\frac{|x|^2}{4}}d\mu
\\&&+(4\pi)^{-\frac{n}{2}}\int_{\Sigma}ah|x^\perp|^2e^{-\frac{|x|^2}{4}}d\mu
\\&&+(4\pi)^{-\frac{n}{2}}\int_{\Sigma}[-\frac{1}{2}|y^\perp|^2-\frac{1}{4}h^2|x^\perp|^2]e^{-\frac{|x|^2}{4}}d\mu.
\end{eqnarray*}
By Lemma \ref{Lemma}, we have
\begin{eqnarray*}
\int_{\Sigma}<x,\nu_k><Jy^\perp,e_k>e^{-\frac{|x|^2}{4}}d\mu
&=&\int_{\Sigma}-<Jx^\perp,Jy^\perp>e^{-\frac{|x|^2}{4}}d\mu
\\&=&\int_{\Sigma}-<x^\perp,y>e^{-\frac{|x|^2}{4}}d\mu
\\&=&0.
\end{eqnarray*}
Hence
\begin{eqnarray*}
F''&=&(4\pi)^{-\frac{n}{2}}\int_{\Sigma}[|d^*_fdu|^2-|du|^2-du(Jy^\perp)-\frac{1}{2}|y^\perp|^2]e^{-\frac{|x|^2}{4}}d\mu
\\&&+(4\pi)^{-\frac{n}{2}}\int_{\Sigma}-(a-\frac{1}{2}h)^2|x^\perp|^2e^{-\frac{|x|^2}{4}}d\mu.
\end{eqnarray*}
Comparing this second variation formula with (\ref{secondvariationformulaHam}), we get the equivalence.
\end{proof}

Let
$$\lambda_1=\inf\frac{\int_\Sigma|d\varphi|^2e^{-\frac{|x|^2}{4}}d\mu}{\int_\Sigma|\varphi|^2e^{-\frac{|x|^2}{4}}d\mu},$$
where the infimum is taken over all non-zero $\varphi$ with $\int_\Sigma \varphi e^{-\frac{|x|^2}{4}}d\mu=0$.
Then the first eigenfunction $\varphi_1$ satisfies
$$d^*d\varphi_1+\frac{1}{2}d\varphi_1(x^\top):=-\triangle_f\varphi_1=\lambda_1\varphi_1.$$
Let $\lambda_1<\lambda_2<\lambda_3<\cdots$ be the nonzero eigenvalues of $\triangle_f$.

\begin{theorem}\label{HamiltonianFStable}
A closed Lagrangian self-shrinker $\Sigma$ that becomes extinct at $(0,1)$ is Hamiltonian F-stable if and only if the twisted Laplacian $\triangle_f$ has
$$\lambda_1=\frac{1}{2}, \quad \Lambda_{\frac{1}{2}}=\{<x,w>, w\in \mathbb{R}^{2n}\}; \quad \lambda_2\geq 1.$$
\end{theorem}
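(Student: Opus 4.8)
The plan is to eliminate the free vector $y$ from the reduced stability inequality of Proposition~\ref{secondvariationofHamiltonian} and thereby reduce Hamiltonian $F$-stability to a pure spectral statement about $\triangle_f$. Throughout I abbreviate $\langle\langle\phi,\psi\rangle\rangle=\int_\Sigma\phi\psi\,e^{-\frac{|x|^2}{4}}d\mu$ and $\|\phi\|^2=\langle\langle\phi,\phi\rangle\rangle$; with respect to this weighted inner product $\triangle_f$ is self-adjoint by (\ref{integrationbyparts}), and on the closed manifold $\Sigma$ the operator $-\triangle_f$ has discrete spectrum $0=\lambda_0<\lambda_1<\lambda_2<\cdots$ with a complete system of eigenfunctions. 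Decomposing $u=\sum_{i\ge0}u_i$ into its eigencomponents and using $d_f^*du=-\triangle_f u$ together with $\|du\|^2=-\langle\langle u,\triangle_fu\rangle\rangle$, the first two terms of (\ref{secondvariationformulaHam}) collapse to
$$\|d_f^*du\|^2-\|du\|^2=\sum_{i\ge1}\lambda_i(\lambda_i-1)\,\|u_i\|^2 .$$
The constant mode is invisible here and in $J\nabla u$, so I may normalize $\langle\langle u,1\rangle\rangle=0$.

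The key device for the $y$-terms is the substitution $w=Jy$, a linear isomorphism of $\mathbb{R}^{2n}$. Because $\Sigma$ is Lagrangian, $J$ interchanges $T\Sigma$ and $N\Sigma$, so $(-Jw)^\perp=-Jw^\top$ and hence $|y^\perp|^2=|w^\top|^2$; Lemma~\ref{Lemma}(4) (with $t_0=1$) then turns this into $\tfrac12\|y^\perp\|^2=\tfrac14\|\langle x,w\rangle\|^2$. For the cross term I would reuse the computation behind (\ref{secondvariationoftwistedharmonic}), namely $\int_\Sigma du(Jy^\perp)e^{-\frac{|x|^2}{4}}d\mu=\langle\langle d_f^*du,\langle x,w\rangle\rangle\rangle=-\langle\langle\triangle_fu,\langle x,w\rangle\rangle\rangle$; since $\triangle_f\langle x,w\rangle=-\tfrac12\langle x,w\rangle$ by Lemma~\ref{Lemma}(1), self-adjointness rewrites this as $\tfrac12\langle\langle u,\langle x,w\rangle\rangle\rangle$. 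Writing $V=\{\langle x,w\rangle:w\in\mathbb{R}^{2n}\}$ and letting $P_V$ be the weighted orthogonal projection onto $V$, the $y$-dependent part of $F''$ becomes $-\tfrac12\langle\langle u,\langle x,w\rangle\rangle\rangle-\tfrac14\|\langle x,w\rangle\|^2$. As $w$ sweeps $\mathbb{R}^{2n}$, $\langle x,w\rangle$ sweeps all of $V$, so completing the square gives the supremum $\tfrac14\|P_Vu\|^2$, attained at $\langle x,w\rangle=-P_Vu$. Hence
$$\sup_{y}F''=(4\pi)^{-\frac n2}\Big(\sum_{i\ge1}\lambda_i(\lambda_i-1)\|u_i\|^2+\tfrac14\|P_Vu\|^2\Big),$$
and Hamiltonian $F$-stability is exactly the statement that the right-hand side is $\ge0$ for every $u$.

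From this formula the theorem falls out by testing against eigenfunctions. I would first record two structural facts: each $\langle x,w\rangle$ is a $\tfrac12$-eigenfunction, so $V\subseteq\Lambda_{\frac{1}{2}}$, and $V\neq\{0\}$ (because $|x|^2\not\equiv0$), so $\tfrac12$ is always an eigenvalue. For the forward implication, assume stability. If $-\triangle_f$ had an eigenvalue $\lambda_j\in(0,1)$ with $\lambda_j\neq\tfrac12$, its eigenfunction $\phi$ would be weighted-orthogonal to $\Lambda_{\frac{1}{2}}\supseteq V$, whence $P_V\phi=0$ and $\sup_yF''$ is a negative multiple of $\|\phi\|^2$, contradicting stability; this excludes eigenvalues in $(0,\tfrac12)$ and in $(\tfrac12,1)$, forcing $\lambda_1=\tfrac12$ and $\lambda_2\ge1$. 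If instead $\Lambda_{\frac{1}{2}}\supsetneq V$, a nonzero $\phi\in\Lambda_{\frac{1}{2}}$ with $\phi\perp V$ again has $P_V\phi=0$ while its $\tfrac12$-eigencontribution is $-\tfrac14\|\phi\|^2<0$, another contradiction; hence $\Lambda_{\frac{1}{2}}=V$. For the converse, assuming $\lambda_1=\tfrac12$, $\Lambda_{\frac{1}{2}}=V$ and $\lambda_2\ge1$, every term with $i\ge2$ satisfies $\lambda_i(\lambda_i-1)\ge0$, while the $i=1$ term equals $-\tfrac14\|u_1\|^2=-\tfrac14\|P_Vu\|^2$ and is cancelled precisely by $+\tfrac14\|P_Vu\|^2$; thus $\sup_yF''\ge0$ for all $u$ and $\Sigma$ is stable.

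The step I expect to be the main obstacle is the $y$-optimization in the second paragraph: one must verify that both $y$-terms reduce to functionals of $\langle x,w\rangle$ alone---via the Lagrangian identity $|y^\perp|^2=|w^\top|^2$, Lemma~\ref{Lemma}(4), and the self-adjoint rewriting of the cross term---and then confirm that $\langle x,w\rangle$ genuinely ranges over all of $V$, so that the supremum is exactly $\tfrac14\|P_Vu\|^2$ rather than something smaller. Once the supremum formula is established the spectral characterization is essentially automatic, the only delicate point being the inclusion $V\subseteq\Lambda_{\frac{1}{2}}$, which makes every eigenmode outside $\Lambda_{\frac{1}{2}}$ invisible to the compensating projection $P_V$ and is what pins down all three conditions simultaneously.
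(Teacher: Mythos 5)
Your proposal is correct, and it is built from the same ingredients as the paper's proof: the fact that each $\langle x,w\rangle$ is a $\tfrac12$-eigenfunction of $-\triangle_f$ (hence $V\subseteq\Lambda_{\frac12}$ and $0<\lambda_1\leq\tfrac12$), the Lagrangian identity $Jy^\perp=\nabla\langle x,Jy\rangle$, Lemma \ref{Lemma}(4), and the self-adjointness of $\triangle_f$ in the weighted $L^2$ space. The genuine difference is in how the translation parameter $y$ is handled. The paper never eliminates $y$: it completes the square \emph{pointwise}, rewriting (\ref{secondvariationformulaHam}) as
\begin{equation*}
F''=(4\pi)^{-\frac{n}{2}}\int_{\Sigma}\Bigl[|d_f^*du|^2-\tfrac{1}{2}|du|^2-\tfrac{1}{2}|\nabla u+Jy^\perp|^2-\tfrac{1}{4}h^2|x^\perp|^2\Bigr]e^{-\frac{|x|^2}{4}}d\mu,
\end{equation*}
and then argues in three sequential steps: testing $u=\varphi_1$ shows $\lambda_1=\tfrac12$ is necessary; for a $\tfrac12$-eigenfunction, stability forces the exact cancellation $\nabla\varphi_1+Jy^\perp=0$, hence $\varphi_1=-\langle x,Jy\rangle$ and $\Lambda_{\frac12}=V$; finally, with both conditions in hand, decomposing $u=a+\langle x,w\rangle+u_2$ and choosing $y=Jw$, $h=0$ reduces stability to $\lambda_2\geq 1$. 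You instead integrate the cross term by parts (turning $\int_\Sigma du(Jy^\perp)e^{-|x|^2/4}d\mu$ into $\tfrac12\int_\Sigma u\langle x,w\rangle e^{-|x|^2/4}d\mu$ via the eigenvalue equation) and optimize over $y$ once and for all, arriving at the closed-form criterion that stability is equivalent to $\sum_{i\geq1}\lambda_i(\lambda_i-1)\|u_i\|^2+\tfrac14\|P_Vu\|^2\geq0$ for every $u$, from which all three spectral conditions fall out simultaneously by testing eigenmodes. The two computations are equivalent — your completion of the square in the weighted $L^2$ inner product is the integrated version of the paper's pointwise one — but your packaging buys a single necessary-and-sufficient inequality and a uniform treatment of necessity and sufficiency, at the cost of invoking the full spectral decomposition of $\triangle_f$ up front; the paper's pointwise square keeps the argument more elementary and makes the geometric origin of the eigenspace condition transparent (the gradient of a $\tfrac12$-eigenfunction must literally be a translation field). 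Your supremum step is sound because the map $w\mapsto\langle x,w\rangle$ surjects onto $V$ by definition, so the optimal $\langle x,w\rangle=-P_Vu$ is actually realized; since you verified this, there is no gap.
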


\begin{proof}
Note that for any vector $w\in \mathbb{R}^{2n}$,
$$\triangle_f <x,w>=-\frac{1}{2}<x,w>,$$
hence $$0<\lambda_1\leq \frac{1}{2}.$$
The formula (\ref{secondvariationformulaHam}) can be rewritten as
$$F''=(4\pi)^{-\frac{n}{2}}\int_{\Sigma}[|d_f^*du|^2-\frac{1}{2}|du|^2-\frac{1}{2}|\nabla u+Jy^\perp|^2-\frac{1}{4}h^2|x^\perp|^2]e^{-\frac{|x|^2}{4}}d\mu.$$
If $\lambda_1<\frac{1}{2}$, by taking $u=\varphi_1$ we get
\begin{eqnarray*}
F''&=&(4\pi)^{-\frac{n}{2}}\int_{\Sigma}\lambda_1(\lambda_1-\frac{1}{2})\varphi_1^2e^{-\frac{|x|^2}{4}}d\mu
\\&&+(4\pi)^{-\frac{n}{2}}\int_{\Sigma}[-\frac{1}{2}|\nabla \varphi_1+Jy^\perp|^2-\frac{1}{4}h^2|x^\perp|^2]e^{-\frac{|x|^2}{4}}d\mu
\\&<&0.
\end{eqnarray*}
Hence $\lambda_1=\frac{1}{2}$ is necessary for the Hamiltonian F-stability.

For any first eigenfunction $\varphi_1$ with $-\triangle_f\varphi_1=\frac{1}{2}\varphi_1$, we get
\begin{eqnarray*}
F''&=&(4\pi)^{-\frac{n}{2}}\int_{\Sigma}[-\frac{1}{2}|\nabla \varphi_1+Jy^\perp|^2-\frac{1}{4}h^2|x^\perp|^2]e^{-\frac{|x|^2}{4}}d\mu.
\end{eqnarray*}
Note that
$$Jy^\perp=(Jy)^\top=\nabla<x,Jy>,$$
hence the following is necessary for the Hamiltonian F-stability
$$\varphi_1+<x,Jy>=0.$$
Therefore Hamiltonian F-stability implies that the first eigenfunction space is
$$\Lambda_{\frac{1}{2}}=\{<x,w>, w\in \mathbb{R}^{2n}\}.$$
Here $<x,w>=0$ may happen for some $w\neq 0$.
Note also that the corresponding Hamiltonian variations generated by the first eigenfunction space are the translations $w^\perp$.

Assuming that $\lambda_1=\frac{1}{2}$ and the first eigenfunction space is $\{<x,w>, w\in \mathbb{R}^{2n}\}$, we now show that
the Hamiltonian F-stability is equivalent to $\lambda_2\geq 1$.
Given a function $u$, it admits a decomposition
$$u=a+<x,w>+u_2,$$
such that
$$\int_\Sigma u_2e^{-\frac{|x|^2}{4}}d\mu=\int_\Sigma u_2<x,z>e^{-\frac{|x|^2}{4}}d\mu=0,\quad \forall z\in \mathbb{R}^{2n}.$$
Then by (\ref{secondvariationformulaHam}) and the above orthogonal condition,
\begin{eqnarray*}
F''&=&(4\pi)^{-\frac{n}{2}}\int_{\Sigma}[\frac{1}{4}<x,w>^2+|d_f^*du_2|^2-\frac{1}{2}<x,w>^2-|du_2|^2]e^{-\frac{|x|^2}{4}}d\mu
\\&&+(4\pi)^{-\frac{n}{2}}\int_{\Sigma}[-<Jy^\perp,w>-du_2(Jy^\perp)-\frac{1}{2}|y^\perp|^2-\frac{1}{4}h^2|x^\perp|^2]e^{-\frac{|x|^2}{4}}d\mu.
\end{eqnarray*}
By Lemma \ref{Lemma}
$$\int_{\Sigma}<x,w>^2e^{-\frac{|x|^2}{4}}d\mu=\int_{\Sigma}2|w^\top|^2e^{-\frac{|x|^2}{4}}d\mu,$$
and by the above mentioned orthogonal condition,
\begin{eqnarray*}
\int_{\Sigma}du_2(Jy^\perp)e^{-\frac{|x|^2}{4}}d\mu
&=&\int_{\Sigma}<du_2,d<Jy,x>>e^{-\frac{|x|^2}{4}}d\mu
\\&=&\int_{\Sigma}u_2d_f^*d<Jy,x>e^{-\frac{|x|^2}{4}}d\mu
\\&=&\int_{\Sigma}u_2\frac{1}{2}<Jy,x>e^{-\frac{|x|^2}{4}}d\mu
\\&=&0.
\end{eqnarray*}
Hence
\begin{eqnarray*}
F''&=&(4\pi)^{-\frac{n}{2}}\int_{\Sigma}[|d_f^*du_2|^2-|du_2|^2-\frac{1}{2}|w^\top+Jy^\perp|^2-\frac{1}{4}h^2|x^\perp|^2]e^{-\frac{|x|^2}{4}}d\mu.
\end{eqnarray*}
Taking $y=Jw$ and $h=0$, we get
\begin{eqnarray*}
F''&=&(4\pi)^{-\frac{n}{2}}\int_{\Sigma}[|d_f^*du_2|^2-|du_2|^2]e^{-\frac{|x|^2}{4}}d\mu,
\end{eqnarray*}
which is nonnegative for all such $u_2$ if and only if $\lambda_2\geq 1$.
\end{proof}

From the proof of Theorem \ref{HamiltonianFStable}, we see the following

\begin{theorem}
A closed Lagrangian self-shrinker $\Sigma$ that becomes extinct at $(0,1)$ is Hamiltonian F-stable if and only if
\begin{equation*}
\int_\Sigma|du|^2e^{-\frac{|x|^2}{4}}d\mu\geq \int_\Sigma u^2e^{-\frac{|x|^2}{4}}d\mu, \quad \text{for all u s.t. }\quad
\int_\Sigma ue^{-\frac{|x|^2}{4}}d\mu=\int_\Sigma uxe^{-\frac{|x|^2}{4}}d\mu=0.
\end{equation*}
\end{theorem}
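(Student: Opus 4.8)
The plan is to reduce the stated inequality to the spectral conditions of Theorem \ref{HamiltonianFStable}, which by that theorem are equivalent to Hamiltonian F-stability; thus it suffices to show that the inequality holds if and only if $\lambda_1=\frac{1}{2}$, $\Lambda_{\frac{1}{2}}=\{<x,w>,\ w\in\mathbb{R}^{2n}\}$ and $\lambda_2\geq 1$. First I would rewrite the inequality in spectral terms. By the integration-by-parts identity (\ref{integrationbyparts}) with $u=v$ one has $\int_\Sigma|du|^2 e^{-\frac{|x|^2}{4}}d\mu=-\int_\Sigma u\,\triangle_f u\,e^{-\frac{|x|^2}{4}}d\mu$, so the inequality is a lower bound on the Rayleigh quotient of $-\triangle_f$. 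Moreover the two constraints $\int_\Sigma u e^{-\frac{|x|^2}{4}}d\mu=0$ and $\int_\Sigma u x\,e^{-\frac{|x|^2}{4}}d\mu=0$ say precisely that $u$ is $L_f^2$-orthogonal to the constants and to every linear function $<x,w>$; write $V$ for this constraint subspace. Since $-\triangle_f 1=0$ and $-\triangle_f<x,w>=\frac{1}{2}<x,w>$, and since $-\triangle_f$ is self-adjoint with respect to $e^{-\frac{|x|^2}{4}}d\mu$, any eigenfunction whose eigenvalue is neither $0$ nor $\frac{1}{2}$ automatically lies in $V$.

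For the direction assuming the spectral conditions, I would observe that $\mathrm{span}\{1\}$ and $\{<x,w>\}$ are then exactly the eigenspaces for the eigenvalues $0$ and $\lambda_1=\frac{1}{2}$, so that $V$ is the $L_f^2$-closure of the span of the eigenfunctions with eigenvalue $\geq\lambda_2$. Expanding $u\in V$ in this eigenbasis and using $\lambda_2\geq 1$ gives $\int_\Sigma|du|^2 e^{-\frac{|x|^2}{4}}d\mu\geq\lambda_2\int_\Sigma u^2 e^{-\frac{|x|^2}{4}}d\mu\geq\int_\Sigma u^2 e^{-\frac{|x|^2}{4}}d\mu$, which is the asserted inequality.

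For the converse I would test the inequality against specific eigenfunctions. If $\lambda_1<\frac{1}{2}$, then a first eigenfunction $\varphi_1$ is orthogonal to the constants (distinct eigenvalue) and to each $<x,w>$ (distinct eigenvalue), hence $\varphi_1\in V$, yet $\int_\Sigma|d\varphi_1|^2 e^{-\frac{|x|^2}{4}}d\mu=\lambda_1\int_\Sigma\varphi_1^2 e^{-\frac{|x|^2}{4}}d\mu<\int_\Sigma\varphi_1^2 e^{-\frac{|x|^2}{4}}d\mu$, contradicting the inequality; since $<x,w>$ is always a $\frac{1}{2}$-eigenfunction orthogonal to constants we have $\lambda_1\leq\frac{1}{2}$ automatically, so in fact $\lambda_1=\frac{1}{2}$. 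Next, any $\frac{1}{2}$-eigenfunction orthogonal to all $<x,w>$ would again lie in $V$ and violate the inequality, forcing $\Lambda_{\frac{1}{2}}=\{<x,w>\}$. Finally, applying the inequality to a second eigenfunction $\varphi_2\in V$ yields $\lambda_2\geq 1$. Combining the two directions with Theorem \ref{HamiltonianFStable} completes the argument.

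The step I expect to be the main obstacle is the bookkeeping in the converse that identifies the constraint space $V$ correctly before the spectral conditions are known: one must verify that the linear functions exhaust the $\frac{1}{2}$-eigenspace by invoking orthogonality of eigenfunctions with distinct eigenvalues, while handling the degenerate possibility that $<x,w>\equiv 0$ for some $w\neq 0$, so that the condition ``$u$ orthogonal to all $<x,w>$'' is treated as a statement about functions in $L_f^2$ rather than about the vectors $w$.
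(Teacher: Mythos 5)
Your proof is correct and takes essentially the same route as the paper: the paper's own proof is the one-line remark that the statement follows from the proof of Theorem \ref{HamiltonianFStable}, i.e., from the spectral characterization $\lambda_1=\tfrac{1}{2}$, $\Lambda_{\frac{1}{2}}=\{\langle x,w\rangle,\ w\in\mathbb{R}^{2n}\}$, $\lambda_2\geq 1$, and your argument supplies exactly the missing equivalence between that characterization and the stated weighted inequality, using the same ingredients (self-adjointness of $\triangle_f$, orthogonality of eigenspaces, discreteness of the spectrum on the closed $\Sigma$). Your closing caveat about $\langle x,w\rangle\equiv 0$ for some $w\neq 0$ is also consistent with the paper's own remark on this degeneracy, so no gap remains.
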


We have seen that the Lagrangian F-stability of a closed self-shrinker with $b_1=1$ is equivalent to the Hamiltonian F-stability.
However for $b_1\geq 2$, there is a difference between two stabilities.
A simple example is the (Lagrangian) Clifford torus $$T^n=\{(z^1,\cdots,z^n): \quad |z^1|^2=\cdots=|z^n|^2=2\},$$
whose mean curvature is $H=-\frac{1}{2}x=-\frac{1}{2}x^\perp$.
Hence the Clifford torus is a Lagrangian self-shrinker that becomes extinct at $(0,1)$.
By Theorem \ref{Main2}, the Clifford torus is Lagrangian F-unstable.
On the other hand it is well-known that the first non-zero eigenvalue of $T^n$ is $\lambda_1(\triangle)=\frac{1}{2}$,
and the first eigenfunction space is spanned by $\{x^k, y^k\}_{k=1}^n$; the second eigenvalue is $\lambda_2(\triangle)=1$.
Hence the Clifford torus is Hamiltonian F-stable.

\end{document}